\documentclass[twoside, 12pt]{article}
\usepackage[utf8]{inputenc}
\usepackage[english]{babel}
\usepackage{amsthm}
\usepackage{geometry}
\usepackage{amstext}
\usepackage{amssymb}
\usepackage{amsmath}
\usepackage{setspace}
\usepackage{cmap}
\usepackage[pdftex]{graphicx}
\usepackage{hyperref}
\newtheorem{thm}{Theorem}
\newtheorem{lem}{Lemma}
\theoremstyle{definition}
\newtheorem{remark}{Remark}
\begin{document}
\title{Survival Probability of Markov Linear Reccurence Sequence in a Random Environment: Subcritical Case} 
\author{Alexander Shklyaev \footnote{Steklov Mathematical Institute of Russian Academy of Sciences, Moscow, Russia, alexander.shklyaev@math.msu.ru}} 

\maketitle
\begin{abstract}
The Markov linear recurrence sequence in a random environment (MRSRE) is a particular case of Markov chain on non-negative integers. It is a natural generalization of several models with branching. We consider MRSRE with absorption at zero and study the survival time. We introduce a classification of subcritical MRSRE corresponding to that of branching process in a random environment. We show that general R-positivity theory can be applied to this model in a strongly subcritical case and prove the analogues of the Kolmogorov theorem and the Yaglom theorem. In other words, we describe the asymptotic behavior of the survival probability over a long time and the conditional distribution of the sequence, conditioned on the survival event. The results are applied to particular branching models.
\end{abstract}

\section{Introduction} 

Let $\boldsymbol\eta=(\eta_1,\eta_2,\dotsc)$ be a sequence of i.i.d. random variables (r.v.) called the environment. Assume that $\{B_n\}$ is a sequence of random variables such that 
$\mathcal{F}_n:=\sigma(B_1,\dotsc, B_n, \eta_1, \dotsc, \eta_n)$ is independent of $\mathcal{G}_{\ge n}:=\sigma(\eta_{n+1},\eta_{n+2},\dotsc)$. 

We say that the sequence $Y_{n+1} = A_{n+1} Y_n + B_{n+1}$, $n\ge 0$, where $A_{n} = g_{\xi}(\eta_{n})$ for some measurable function $g_{\xi}$, is a Markov Linear Recurrence Sequence in a Random Environment (MRSRE) if 
$\{Y_{n}\}$ is a Markov chain in a random environment $\boldsymbol\eta$ (so, for a fixed $\boldsymbol\eta$ it is a non-homogeneous Markov chain with a transition matrix $Q_{\eta}$ for some family $\{Q_y, y\in \mathbb{R}\}$). 

We always assume the following asymptotic negligibility condition on $\{B_n\}$: for some $h$ there exists $\widetilde{h}=\widetilde{h}(h)\in (0,h)$ such that
\begin{equation}
\label{BnCond}
{\bf E}_{\boldsymbol\eta}(|B_{n+1}|^{h}|Y_n)\le \kappa_{n+1} e^{h\xi_{n+1}} Y_n^{\widetilde{h}},\ Y_n>0,
\end{equation}
for $\kappa_n=g_{\kappa}(\eta_{n};h)$ for some measurable $g_{\kappa}(\cdot;h)$. Note that by the Lyapunov inequality if~(\ref{BnCond}) holds for some $h$, then it holds for any $h_0\in (0,h]$ with $\widetilde{h}(h_0)=h_0 \widetilde{h}(h)/h$. Thus, we always assume that~(\ref{BnCond}) holds for all $h$ in some interval $(0,\widehat{h}]$. For the same reason, we always assume that $h-\widetilde{h}(h)$ is increasing in $h$ on $(0,\widehat{h}]$ and that $\widetilde{h}(h)$ is continuous.

This model generalizes many known branching models, including the Galton-Watson branching process (BPGW), the branching processes with immigration (BPI) or migration (BPM), the branching processes in a random environment (BPRE), the branching processes in a random environment with immigration (BPREI) or migration (BPREM), the bisexual branching processes (BBP), and the bisexual branching process in a random environment (BBPRE). The models close to MRSRE were introduced in~\cite{ShkUp}. In \cite{ShkBBPRE} a BBPRE was represented in such a form. All the models above and their connection with MRSRE will be given in the last section.

The random walk $\{S_n\}$ with the steps $\xi_i=\ln A_i$, $i\ge 1$, is called the associated random walk for the MRSRE $\{Y_n\}$. If $\mu={\bf E}\xi<0$, then we call MRSRE subcritical, if $\mu=0$, we call it critical, otherwise it is called supercritical.
Assume that the Markov chain $\{Y_n\}$ has the state space $D\cup\{0\}$, where $D\subseteq \mathbb{N}$, $\{0\}$ is an absorbing state, the chain reduced to $D$ is irreducible and aperiodic, and the probability of absorption is positive. In this case we call $\{Y_n\}$ a positive zero-absorbing MRSRE.

We consider a subcritical MRSRE and study the probabilities ${\bf P}(Y_n>0)$ and ${\bf P}(Y_n=i|Y_n>0)$, $i\in \mathbb{N}$, as $n\to\infty$. This problem has a long history for different branching models. For BPGW the corresponding results are known as the Kolmogorov theorem (see \cite{KolmogorovBP}) and the Yaglom theorem (see \cite{Yaglom}). 
For the BPRE the subcritcal case is divided into three regimes, our work is related to the strongly subcritical one (see Chapter 9 of~\cite{VatKerst} for results for the BPRE).
For the BPI stopped at zero the similar results were obtained in~\cite{SubcritImSeneta}; for the BPM stopped at zero, in \cite{SubcritM}; for BPREI stopped at zero, in \cite{SubcritIm}.
However, for the BBP or BBPRE no such results are known. We also do not know such results for the BPREM stopped at zero. 
We prove that for MRSRE there is a general result of this kind, using the general theorem from work \cite{Quasi}.

It is known (see~\cite{Vere-Jones}) that 
there exists
\begin{equation}
\label{rho}
\rho:=\lim_{n\to\infty}\sqrt[n]{{\bf P}(Y_n=j|Y_n=i)}\in (0,1),
\end{equation}
and $\rho$ is independent of $i,j\in D$. The fact that $\rho<1$ is a corollary of Lemma 2 of \cite{Seneta-Vere-Jones}. 

Let $R(h)={\bf E}e^{h\xi}$ be finite for $h\in [0,h^+)$. Assume that $\rho^+:=\inf_{[0,h^+)} R(h)<\rho$. The corresponding MRSRE is called strongly subcritical.
 Then there exists $h^*_+$ such that $R(h^*_+)=\rho$ and $R'(h^*_+)<0$.

The main result of this work is the following. Let $T$ be the time until absorption of $\{Y_n\}$ and let~(\ref{BnCond}) hold for some $h> h^*_+$. Then we show that ${\bf P}(T>n)\sim C \rho^n$ for some positive $C$ and 
$${\bf P}(Y_n=k|T>n)\to p_k^*$$
 for some distribution $\{p_k^*\}$ on $D$. Section~\ref{SubcriticalYn} is devoted to this case.

The second result is the following. Assume that $\{Y_n\}$ is an irreducible subcritical MRSRE (so, $\{0\}$ is not absorbing). In Section~\ref{NoAbsorption} we show that $\{Y_n\}$ is geometrically ergodic under some moment conditions. 
 
In Section~\ref{Applications} we consider particular cases of MRSRE: BPRE, BPREI, BBPRE and formulate the theorems for these cases. Some of them are new for these models. We allow $\xi$ to be degenerate, therefore, the same result is true for the BPRE, BBP, BPM. Our general result is weaker than known results for particular models, but for several models the results are new.
\section{Preliminaries}
\subsection{Cramer condition}
Let us give a short introduction to the Cramer measure transform (for a brief discussion see \cite{Borov}). Let $\xi$, $\xi_i$, $i\ge 1$, be i.i.d. r.v. on the probability space $(\Omega,\mathcal{F},{\bf P})$. Assume that $\xi$ satisfies the right-hand Cramer condition:
$$
R(h):={\bf E}e^{h\xi}<+\infty,\ h\in [0,\widehat{h}).
$$
The function $R(h)$ is convex infinitely smooth function. Let $m(h):= (\ln R)'(h)$, $\sigma^2(h):=m'(h)$, $h\in (0,\widehat{h})$. Let $\xi^{(h)}$ be a r.v. with the distribution
$$
{\bf P}(\xi^{(h)}\in A) = R(h)^{-1} {\bf E}(e^{h \xi}; A),\ h\in [0,\widehat{h}).
$$
The corresponding distribution is called conjugate. Note that ${\bf E}\xi^{(h)}=m(h)$, $\operatorname{Var}\xi^{(h)}=\sigma^2(h)$, $h\in (0,\widehat{h})$.

 If $\xi^{(h)}$, $\xi^{(h)}$, $i>0$, are i.i.d. r.v., then for any natural $n$ we can define the measure
$$
{\bf P}_n^{(h)}(A):= {\bf P}\left(\left(\xi^{(h)}_1,\dotsc, \xi^{(h)}_n\right)\in B\right),\quad 
A = \{\omega:\ (\xi_1(\omega),\dotsc, \xi_n(\omega))\in B\},\ B\in \mathcal{B}(\mathbb{R}^n).
$$
By the Kolmogorov extension theorem there exists the corresponding measure ${\bf P}^{(h)}$ on $\sigma(\xi_i, i>0)$. Finally, we extend this measure to $\mathcal{F}$ by the relation
$$
{\bf P}^{(h)}(A):= {\bf E}^{(h)}{\bf P}(A|\xi_1,\dotsc, \xi_n),\quad A\in \mathcal{F},
$$
where ${\bf E}^{(h)}$ is the expectation on $\sigma(\xi_1,\xi_2,\dotsc, \xi_n)$ with respect to ${\bf P}^{(h)}$. 
\subsection{Martingale ineqiality}
We need a number of inequalities close to the classical Marcinkiewicz–Zygmund inequality. 
\begin{thm}
\label{BahrTh}
Let $(X_n,\mathcal{F}_n)$ be a martingale difference sequence, ${\bf E}X_n^{h}<+\infty$, $h>1$. Then 
$$
{\bf E}|\sum_{i=1}^{n} X_i|^h\le C \left(\sum_{i=1}^{n} \left({\bf E}|X_i|^h\right)^{\min(2/h,1)}\right)^{\min(2/h, 1)},
$$
where $C$ is some constant. For $h\in (1,2)$ $C\le 2$.
\end{thm}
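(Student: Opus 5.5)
The argument splits at $h=2$. For $h\in(1,2]$ the bound is von Bahr--Esseen type with $C=2$. For $h>2$ it follows from Burkholder's inequality combined with Minkowski's inequality. Throughout, $h>1$ is fixed.

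\emph{Case $h\ge 2$.} Here $\min(2/h,1)=2/h$, so the claim reads
$$
{\bf E}\Bigl|\sum_{i=1}^n X_i\Bigr|^h\le C\Bigl(\sum_{i=1}^n \bigl({\bf E}|X_i|^h\bigr)^{2/h}\Bigr)^{h/2}\cdot\Bigl(\sum_{i=1}^n \bigl({\bf E}|X_i|^h\bigr)^{2/h}\Bigr)^{2/h-h/2}.
$$
The exponent printed in the statement is $2/h$ on the outer bracket. The natural (and correct) form is the outer exponent $h/2$, and I will prove
$$
{\bf E}\Bigl|\sum X_i\Bigr|^h\le C_h\Bigl(\sum ({\bf E}|X_i|^h)^{2/h}\Bigr)^{h/2}.
$$
I read the statement's outer exponent as this normalization, since $\min(2/h,1)$ on the outside would not be homogeneous of degree $h$ in the $X_i$.

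The proof in this case has two steps:
\begin{enumerate}
\item Burkholder's square function inequality for martingales gives ${\bf E}|\sum X_i|^h\le C_h{\bf E}(\sum X_i^2)^{h/2}$.
\item Since $h/2\ge1$, Minkowski's inequality in $L^{h/2}$ applied to the nonnegative summands $X_i^2$ gives $\|\sum X_i^2\|_{h/2}\le\sum\|X_i^2\|_{h/2}=\sum({\bf E}|X_i|^h)^{2/h}$. Raising to the power $h/2$ yields the bound.
\end{enumerate}

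\emph{Case $1<h\le 2$.} Here both exponents equal $1$, so the claim is von Bahr--Esseen: ${\bf E}|\sum X_i|^h\le 2\sum{\bf E}|X_i|^h$. I would prove it by induction on $n$, from the one-step inequality
$$
{\bf E}|a+X|^h\le |a|^h+2{\bf E}|X|^h,
$$
valid for $\mathcal{F}_{n-1}$-measurable $a=\sum_{i<n}X_i$ and $X=X_n$ with ${\bf E}(X|\mathcal{F}_{n-1})=0$. The inequality comes from a pointwise bound: for real $a,x$,
$$
|a+x|^h\le |a|^h+h\,|a|^{h-1}\operatorname{sgn}(a)\,x+2|x|^h .
$$
Taking conditional expectation kills the linear term because the increment has conditional mean zero. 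Summing over $n$ then gives the constant $C=2$.

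The main obstacle is the elementary inequality with constant $2$. I would reduce it by scaling to $a=1$ and $x=t$, i.e. to $\varphi(t)=|1+t|^h-1-ht\le 2|t|^h$, and check it on three ranges:
\begin{itemize}
\item For $|t|\le1$: $\varphi(t)\le c\,t^2\le c|t|^h$, using Taylor's formula with second derivative $h(h-1)|1+s|^{h-2}$. Near $t=-1$ this derivative is large, so there I would bound $\varphi$ directly instead.
\item For $t\ge1$ or $t\le -1$: $|1+t|^h\le(1+|t|)^h$ together with concavity-type estimates gives the bound.
\end{itemize}
Each case reduces to a one-variable calculus inequality. Alternatively, the sharp constant $2$ can be quoted from von Bahr and Esseen (1965), whose proof is exactly this reduction.
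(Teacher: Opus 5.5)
Your proof is correct. It differs from the paper mainly in being self-contained: the paper proves nothing here, deferring to Pinelis for $h\in(1,2]$ and to Theorem~2 of Nagaev for $h>2$. You instead run the classical von Bahr--Esseen induction for $1<h\le 2$, and for $h\ge 2$ combine Burkholder's square-function inequality with Minkowski's inequality in $L^{h/2}$.

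The induction step holds up. The pointwise bound $|1+t|^h\le 1+ht+2|t|^h$ is valid on all of $\mathbb{R}$; near $t=-1$ you can use $(1+t)^h\le 1+t$ on $[-1,-1/2]$ and Taylor's formula on $[-1/2,0]$. The cross term $|S_{n-1}|^{h-1}\operatorname{sgn}(S_{n-1})X_n$ is integrable by H\"older, so it drops out after conditioning on $\mathcal{F}_{n-1}$.

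Your route buys transparency and gives $C=2$ on $(1,2]$ directly. The cited results are finer (Pinelis is concerned with optimal constants), but the paper only uses the existence of some finite $C$.

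Your reinterpretation of the outer exponent is a necessary correction, not just a normalization. As printed, the bound fails for $h>2$ under the scaling $X_i\mapsto\lambda X_i$: the left side is of order $\lambda^h$ and the right side of order $\lambda^{4/h}$. The version you prove, with outer exponent $\max(h/2,1)$, is what the paper actually needs downstream. It yields $\widetilde h(h)=\max(h/2,1)$ in the migration section; the $\min(h/2,1)$ written in the BPRE section is the same typo.
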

For the case $h\in (1,2]$ the theorem is proved in~\cite{Pinelis}. For $h>2$ it's Theorem 2 of \cite{NagaevMartingales}. The first versions in the case of i.i.d. variables were obtained by von Bahr and Esseen (\cite{Bahr}) and Dharmadhikari and Jogdeo (\cite{Dhar}).
\subsection{$R$-positivity}
We need some general facts about Markov chains from work~\cite{Quasi}. Let $\{X_n\}$ be a Markov chain on $\mathbb{N}_{0}=\mathbb{N}\cup \{0\}$. Let $P=(p_{i,j}, i,j\ge 0)$ be the transition matrix of $\{X_n\}$ and $P^+=(p_{i,j}, i>0, j>0)$. Assume that $\{0\}$ is the only absorbing state, accessible from some positive state. Let $\{X_n\}$ be irreducible and aperiodic on $\mathbb{N}$ (in other words, the matrix $P^+$ is aperiodic and irreducible). If the absorption time $T$ is finite a.s., then we say that $\{X_n\}$ is a positive Markov chain with certain extinction.
Note that instead of $\mathbb{N}\cup \{0\}$ we can consider the state space $A\cup\{0\}$, where $A$ is a subset of $\mathbb{N}$.

For a positive Markov chain with certain extinction it is known (see \cite{Vere-Jones}), that there exists $R\in [1,+\infty)$ such that independently of $i,j\in \mathbb{N}$
$$
R^{-1} = \lim_{n\to\infty} {\bf P}(X_n=j|X_0=i)^{1/n}.
$$ 
\begin{thm}[\cite{Quasi}, Theorem 1]
\label{MainQSTh}
Let $\{X_n\}$ be a positive Markov chain with certain extinction.
Suppose that there exists a set $K\subset \mathbb{N}$ such that:
\begin{enumerate}
\item[QS1)] for some positive $C_1$, $\varepsilon$ and all $n\in \mathbb{N}$ 
$${\bf P}(X_i\not\in \{0\}\cup K, 1\le i\le n|X_0=j)\le C_1 (R+\varepsilon)^{-n},\ j\in K;$$
\item[QS2)] for some $j\in K$, $C_2>0$ and for all $i\in K$ and $n\in \mathbb{N}$
$${\bf P}(T>n|X_0=i)\le C_2 {\bf P}(T>n|X_0=j);$$
\item[QS3)] for some finite subset $K_1\subset \mathbb{N}$, a natural number $n_0$ and a positive constant $C_3$ for all $i\in K$ 
$${\bf P}(\exists j\le n_0:\ X_j\in K_1|X_0=i)\ge C_3.$$
\end{enumerate}
Then
\begin{enumerate}
\item[1)] $\pi P_{+} =  R^{-1} \pi$, where $\pi$ is a probability distribution on $\mathbb{N}$;
\item[2)] for some non-negative function $f:\mathbb{N}\to \mathbb{R}$ we have $P_{+} f = R^{-1} f$, and $\sum_{i>0} f(i) \pi_i=1$;
\item[3)] for all $i\in \mathbb{N}$ we have
\begin{equation}
\label{SurviveProbability}
\lim_{n\to\infty} R^{-n} {\bf P}(T>n|X_0=i)=f(i);
\end{equation}
\item[4)] for all $i,j\in \mathbb{N}$ we have
\begin{equation}
\label{LocalSurvProb}
\lim_{n\to\infty} {\bf P}(X_n=j|X_0=i, T>n)= \pi_j.
\end{equation}
\end{enumerate}
\end{thm}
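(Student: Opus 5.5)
This is a quoted result (Theorem 1 of \cite{Quasi}), so the proof is meant to reproduce that argument rather than introduce anything specific to MRSRE. The approach is the classical $R$-theory of Vere-Jones for the irreducible aperiodic substochastic matrix $P^+$. The main goal is to show that $P^+$ is $R$-positive. Then there is a left eigenmeasure $\pi$ and a right eigenfunction $f$, both $R^{-1}$-invariant, with $\sum_i \pi_i f(i)<\infty$. The strong ratio limit $R^n p^{(n)}_{ij}\to f(i)\pi_j$ then holds; this is the Vere-Jones theorem for aperiodic $R$-positive kernels after normalizing $\sum \pi_i f(i)=1$. Conclusions 1)--4) will follow once we also show that $\pi$ is summable and that we may interchange the limit with the sum over $j$ in ${\bf P}(T>n|X_0=i)=\sum_j p^{(n)}_{ij}$.

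The key steps, in order, are as follows. First fix a state $k\in K_1$ and use the last-exit/first-entrance decomposition at $k$. $R$-recurrence plus $R$-positivity is equivalent to finiteness of the tail generating function of the taboo return times $\sum_n n\,{}_kp^{(n)}_{kk}R^n$, and more quantitatively to ${}_kf_{kk}(R+\varepsilon')<\infty$ for some $\varepsilon'>0$. I would bound this taboo series by splitting each excursion from $k$ into two kinds of pieces: stretches spent outside $K$, and visits to $K$. By QS1, a stretch outside $K$ costs at most $C_1(R+\varepsilon)^{-n}$. By QS3, every time the chain is in $K$ it reaches the finite set $K_1$ within $n_0$ steps with probability at least $C_3$. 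Irreducibility on the finite set $K_1$ then gives a uniform positive probability of hitting $k$ within a bounded number of further steps. Combining these, the excursion length from $k$ has a geometric tail with rate strictly below $R^{-1}$. That gives $R$-positivity, hence the eigenvectors $\pi$ and $f$ and the ratio limit.

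Second, to obtain 3) I would show that $R^n{\bf P}(T>n|X_0=i)$ converges to $f(i)\sum_j\pi_j$. Splitting the path again by the last visit to $K$, the contribution of paths that avoid $K$ for a long final stretch is controlled by QS1 and is summable in $R^n$. For the remaining part I would use QS2: it gives the uniform domination ${\bf P}(T>n|X_0=i)\le C_2{\bf P}(T>n|X_0=j)$ for $i\in K$. Together with the ratio limit on finite sets, this gives tightness of the conditional laws ${\bf P}(X_n\in\cdot\,|T>n)$. Tightness yields both $\sum_j\pi_j<\infty$, so $\pi$ can be normalized to a probability distribution, and the limit $f(i)$ in~(\ref{SurviveProbability}) after rescaling $f$. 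Dividing the local limit by the survival limit then gives~(\ref{LocalSurvProb}).

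I expect the main obstacle to be this second step, namely establishing uniform integrability/tightness of the mass that escapes to infinity, i.e.\ justifying the interchange of $\lim_n$ and $\sum_j$. This is where QS1 and QS2 must be combined carefully. What I would try first is a renewal-type inequality: write $R^n{\bf P}(T>n, X_n\notin F|X_0=i)$ as a convolution of the last-exit probabilities from $K$ with QS1-bounded excursions. Then choose the finite set $F$ large, so that the ratio-limit part dominates and the remainder is $O(\sum_m (R/(R+\varepsilon))^m)$ restricted to large $m$.
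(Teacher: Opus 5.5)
The paper contains no proof of this statement. It is quoted from \cite{Quasi} and applied, in the proof of Theorem~\ref{YnSub}, only with the finite set $K=\{1,\dots,k\}$. Judged on its own, your proposal has a genuine gap in its first step, which is the heart of the matter.

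QS3 plus irreducibility on $K_1$ gives $c>0$ and $N$ with ${\bf P}(\tau_k\le N\,|\,X_0=x)\ge c$ for all $x\in K$, where $\tau_k$ is the hitting time of $k$. This yields a tail of order $(1-c)^{n/N}$ for repeated failed attempts, but nothing in the argument relates $(1-c)^{1/N}$ to $R^{-1}$. The event $\{\tau_k>N\}$ mixes absorption with survival away from $k$. Survival alone decays no faster than $R^{-n}$, since ${\bf P}(T>n|X_0=x)\ge p^{(n)}_{xx}$. So what must be shown is that avoiding $k$ costs an extra exponential factor on top of survival.

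A two-state example shows the mechanism cannot deliver this. Take $K=K_1=\{1,2\}$, $k=1$, $p_{12}=p_{10}=\frac12$, $p_{22}=\frac12$, $p_{21}=p_{20}=\frac14$. Then QS1--QS3 all hold and $R^{-1}=(1+\sqrt3)/4\approx0.683$. The best bound your argument produces is $(\frac12+2^{-N-1})^{n/N}$, i.e.\ rate at least $\frac34>R^{-1}$. The true taboo rate is $\frac12$, and it is smaller than $R^{-1}$ for a spectral reason: $\frac12$ is the spectral radius of what remains after deleting state $1$. That your Step 1 never uses QS2 is a symptom of the gap. Also, $\sum_n n f^{(n)}_{kk}R^n<\infty$ alone does not give $R$-recurrence; it is the radius of $F_{kk}$ exceeding $R$ that forces $F_{kk}(R)=1$.

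For finite $K$, which is all the paper needs, the missing step is a Perron--Frobenius argument rather than a hitting estimate. For $i,j\in K$ put
\[
M_{ij}(z)=\sum_{s\ge1}z^s\,{\bf P}(X_1,\dots,X_{s-1}\notin K\cup\{0\},\,X_s=j\,|\,X_0=i).
\]
By QS1 these series converge for $z<R+\varepsilon$, and $M(z)$ is irreducible because $P^+$ is. Since $\sum_n p^{(n)}_{ij}z^n=\sum_{m\ge0}(M(z)^m)_{ij}$ has radius exactly $R$, continuity of the spectral radius forces $\mathrm{sr}\,M(R)=1$. By Perron--Frobenius, the principal submatrix $M^{(k)}$ obtained by deleting $k$ has $\mathrm{sr}\,M^{(k)}(R)<1$. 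Hence $\mathrm{sr}\,M^{(k)}(z)<1$ on some $[R,R+\varepsilon')$, and there
\[
F_{kk}(z)=M_{kk}(z)+\sum_{i,j\in K\setminus\{k\}}M_{ki}(z)\bigl[(I-M^{(k)}(z))^{-1}\bigr]_{ij}M_{jk}(z)<\infty .
\]

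For infinite $K$ no such finite-matrix argument is available. There QS2 is precisely what converts your unconditional estimate into one conditional on survival: for $x\in K$,
\[
{\bf P}(\tau_k\le N,\,T>n\,|\,X_0=x)\ge c\,{\bf P}(T>n\,|\,X_0=k)\ge c'\,{\bf P}(T>n\,|\,X_0=x),
\]
using the strong Markov property, QS2 and irreducibility. Iterating this along returns to $K$, controlled by QS1, is what produces the extra factor. The same issue reappears in your second step. The domination $\sup_m R^m{\bf P}(X_m\in K\,|\,X_0=i)<\infty$, needed to pass to the limit in the last-exit decomposition, follows from the ratio limit only when $K$ is finite.
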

In work~\cite{QSDExp} (see Corollary 3) it is shown that 
\begin{equation}
\label{RAsConvRadius}
R=\inf\{u: {\bf E}u^{T}=+\infty\}.
\end{equation}
This gives another expression for $R$. In a recent preprint~\cite{QSDNew} the quasi-stationary distribution $\pi$ is characterized as well.
\begin{thm}{(Theorem 2.2 and Corollary 2.5 of~\cite{QSDNew}).}
\label{QSDForm}
Under the conditions of~Theorem~\ref{MainQSTh} 
$$
\pi_i = \frac{R-1}{{\bf E} \left(\left.R^{T_{i}}T_i; T_i<T_0\right|X_0=i\right)}, i\in \mathbb{N},
$$
where $T_i$ is the first return to the state $i$.
\end{thm}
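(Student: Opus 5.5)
The plan is to go through the Doob $h$-transform built from the right eigenfunction $f$ of Theorem~\ref{MainQSTh}, together with a renewal decomposition of the chain at its visits to a fixed state $i$. Fix $i\in\mathbb{N}$ and write ${\bf P}_i(\cdot)={\bf P}(\cdot\,|X_0=i)$. Put $F(s):={\bf E}_i(s^{T_i};T_i<T_0)$ and $m_i:={\bf E}_i(R^{T_i}T_i;T_i<T_0)=RF'(R)$. I expect three ingredients.
\begin{enumerate}
\item[(a)] $R$-recurrence: $F(R)=1$.
\item[(b)] $\pi_i f(i)=1/m_i$.
\item[(c)] An identity giving $f(i)$ (equivalently $\pi_i$) in terms of quantities attached to a single excursion from $i$.
\end{enumerate}
For (a) and (b), Theorem~\ref{MainQSTh} shows that $R^{n}{\bf P}_i(X_n=i)\to\pi_if(i)>0$. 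Hence the chain is $R$-positive, and in particular $R$-recurrent, which is (a). For (b), define the kernel $q_{jk}:=R\,p_{jk}f(k)/f(j)$ on $\{f>0\}$. It is stochastic because $P_+f=R^{-1}f$, and it has the invariant probability $\pi_jf(j)$ by items 1) and 2) of Theorem~\ref{MainQSTh}. A return to $i$ under $q$ at time $n$ has $q$-probability equal to $R^n$ times its $p$-probability, so the $q$-mean return time to $i$ is exactly $m_i$. Kac's formula then gives (b).

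For (c), I would use generating functions. Let $U(s):=\sum_{n\ge0}s^n{\bf P}_i(T>n)$, let $G(s):=\sum_{n\ge 0}s^n{\bf P}_i(X_n=i)=(1-F(s))^{-1}$, and let $H(s):=\sum_{n\ge0}s^n{\bf P}_i(T>n,T_i>n)$. Splitting at the last visit to $i$ before time $n$ gives $U=GH$ for $s<R$. Now let $s\uparrow R$. Since ${\bf P}_i(T>n)\sim f(i)R^{-n}$ by (\ref{SurviveProbability}), we have $U(s)\sim f(i)R/(R-s)$. By (a), $G(s)\sim 1/(F'(R)(R-s))$. Comparing the two gives $f(i)=H(R)/(RF'(R))=H(R)/m_i$, and with (b) this yields $\pi_i=1/H(R)$. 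The same conclusion should also follow directly: normalize the left eigenvector as the $R$-weighted taboo occupation measure of an $i$-excursion, using Vere-Jones' representation, and sum over $j$. I would use this as a cross-check.

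It remains to identify $H(R)$. Summing the geometric series, $H(R)=\bigl({\bf E}_iR^{\min(T_i,T_0)}-1\bigr)/(R-1)$. By (a), ${\bf E}_iR^{\min(T_i,T_0)}=1+{\bf E}_i(R^{T_0};T_0<T_i)$. Thus the target formula reduces to the identity
$$
{\bf E}_i(R^{T_0};T_0<T_i)={\bf E}_i(R^{T_i}T_i;T_i<T_0).
$$
This is the step I expect to be the main obstacle; it is not automatic, and finiteness of $H(R)$ has to be justified along the way. My first attempt would use the fact that under ${\bf P}_\pi$ the absorption time is exactly geometric: ${\bf P}_\pi(T>n)=R^{-n}$, hence $\sum_j\pi_jp_{j0}=1-R^{-1}$. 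I would decompose the stationary (under $q$) cycle structure into $i$-excursions and equate the mass killed per excursion with the mean excursion length weighted by $R^{T_i}$. Concretely, one can differentiate in $s$ the last-exit identity $1-s^{-1}\cdot(\text{killed mass})=(1-F(s))\cdot(\ldots)$ at $s=R$. If the identity cannot be matched this way, I would fall back on expressing $\pi_i$ via (b) and the left-eigenvector normalization, and obtain the $(R-1)$ factor from $\sum_j\pi_jp_{j0}=1-R^{-1}$.
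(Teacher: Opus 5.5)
\textbf{There is a genuine gap: the final identity is false, and so is the displayed statement.}

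Your steps (a) and (b), the last-exit factorization $U=GH$, and the Abelian comparison as $s\uparrow R$ are all correct. The comparison also gives finiteness for free, since $H(R)=f(i)m_i<\infty$. Together these steps already determine $\pi_i$:
\[
\pi_i=\frac{1}{H(R)}=\frac{R-1}{{\bf E}_i(R^{T_0};T_0<T_i)}=\frac{1}{f(i)\,m_i}.
\]
The gap is the identity ${\bf E}_i(R^{T_0};T_0<T_i)={\bf E}_i(R^{T_i}T_i;T_i<T_0)$ that you isolate at the end. It cannot be proved because it is false. By your own computations its left side equals $(R-1)H(R)=(R-1)f(i)m_i$. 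So the identity is equivalent to $f(i)=1/(R-1)$ for every $i$. Multiplying by $\pi_i$, summing, and using the normalization $\sum_i\pi_if(i)=1$ then forces $R=2$.

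Concretely, take $D=\{1\}$ with $p_{11}=p\in(0,1)$ and $p_{10}=1-p$; QS1--QS3 hold trivially with $K=\{1\}$. Then $R=1/p$, $\pi_1=f(1)=1$ and $m_1=Rp=1$, while ${\bf E}_1(R^{T_0};T_0<T_1)=R(1-p)=R-1$. The displayed formula returns $\pi_1=R-1=(1-p)/p$, which is wrong unless $p=1/2$.

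The same failure occurs on an infinite state space with constant killing. Take $p_{ij}=pQ_{ij}$ for a stochastic $Q$ on $\mathbb{N}$ (e.g.\ a reflected random walk with negative drift) with stationary law $\nu$. Here $R=1/p$, $\pi=\nu$, $f\equiv 1$ and $m_i=1/\nu_i$. The formula gives $(R-1)\nu_i$, whose total mass is $R-1$.

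So the statement as transcribed here is off by the factor $(R-1)f(i)$. Remark~\ref{QSDSub}, which is obtained from it by putting $R=1/\rho$, inherits the error. This is presumably a misquotation of \cite{QSDNew}. The paper gives no argument beyond the citation, so there is no proof route to compare yours against.

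Neither of your fallbacks can produce the displayed formula. That rules out both differentiating the last-exit identity at $s=R$ and extracting $R-1$ from $\sum_j\pi_jp_{j0}=1-R^{-1}$. What your argument does prove, completely, is the corrected identity displayed above. You should stop there rather than pursue the last step.
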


Note that for some $c$ there exists a nonnegative non-zero sequence $\{x\}$ such that 
\begin{equation}
\label{IneqCriteriaRight}
\sum_{j=1}^{\infty} p_{i,j} x_j \le c x_i,\ i>0,
\end{equation}
iff $c\ge R^{-1}$ (see \cite{Pruitt}). Similarly, 
\begin{equation}
\label{IneqCriteriaLeft}
\sum_{i=1}^{\infty} x_i p_{i,j}\le c x_j,\ j>0,
\end{equation}
iff $c\ge R^{-1}$.

\section{Subcritical Case}
\label{SubcriticalYn}
\subsection{The main result}
Let's start with the classification of subcritical MRSRE. Let $\rho$ be defined by~(\ref{rho}) and $\rho^+=\inf_{h\ge 0} R(h)$. By~(\ref{RAsConvRadius}) $\rho$ equals the minimal $x$ such that
$$
\sum_{n=1}^{\infty} x^{-n} {\bf P}(Y_n>0)<+\infty.
$$
\begin{lem}
\label{LemBetaRho}
Let $\{Y_n\}$ be a subcritical positive zero-absorbing MRSRE.  Assume that for some $\beta\in (0,\widehat{h})$ $R'(\beta)=0$, ${\bf E}^{(\beta)}\xi^2_1<+\infty$, ${\bf E}^{(\beta)}\ln^2 \kappa_1<+\infty$. Then
$\rho \ge \rho^+=R(\beta)$.
\end{lem}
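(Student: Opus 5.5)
We need a lower bound ${\bf P}(Y_n>0)\ge (R(\beta))^n e^{-o(n)}$. By (\ref{RAsConvRadius}), $\rho$ is the smallest $x$ with $\sum_n x^{-n}{\bf P}(Y_n>0)<\infty$, so such a bound gives $\rho\ge R(\beta)$. The equality $\rho^+=R(\beta)$ is immediate: $R$ is convex and $R'(\beta)=0$, hence $\beta$ minimizes $R$ on $[0,\widehat h)$.

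For the lower bound we pass to the conjugate measure ${\bf P}^{(\beta)}$. Since
$$
{\bf P}(Y_n>0)=R(\beta)^n\,{\bf E}^{(\beta)}\bigl(e^{-\beta S_n};Y_n>0\bigr),
$$
it suffices to show that ${\bf E}^{(\beta)}(e^{-\beta S_n};Y_n>0)$ decays subexponentially. Under ${\bf P}^{(\beta)}$ the walk $S_n$ has zero drift (because $m(\beta)=0$) and finite variance, so it is a centred random walk. We then restrict to a favourable event: $S_k$ stays in a window $[-a\sqrt{n}\,,\,\ldots]$, or better, $S_k\ge -c\ln$-type bounds. It is simplest to use the classical estimate that the probability that a centred finite-variance walk stays above $-x$ for $n$ steps and ends near its start is of order $n^{-3/2}$, which is polynomial. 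On the event $\{S_n\le 0\}$ the factor $e^{-\beta S_n}$ is at least $1$. So it remains to show that, given such a trajectory of the environment, the chain survives with probability bounded below by something polynomial or subexponential.

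To control survival we compare $Y_n$ with $\Pi_n:=e^{S_n}$ via the recursion $Y_n/\Pi_n=Y_0+\sum_{k\le n}B_k/\Pi_k$. We need the $B$-terms not to kill the process. Because $B_k$ can be negative, we bound ${\bf E}_{\boldsymbol\eta}|B_{k}|^h$ using (\ref{BnCond}) with a small $h$ and $\widetilde h<h$. Since $Y$ only needs to stay positive, the perturbation is sublinear in $Y$. The idea is to start from a large state: by irreducibility on $D$, ${\bf P}(Y_m\ge N)>0$ for some fixed $m$. From $Y\ge N$, keeping the environment walk bounded below (so that $Y$ grows or stays proportional to $e^{S_k}$), one shows by Markov's inequality with (\ref{BnCond}) and Theorem~\ref{BahrTh} (or a direct union bound over $k$ of ${\bf P}(|B_k|>Y_{k-1}A_k/2)$) that the chain stays positive with probability at least $1/2$ uniformly in $n$. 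The conditions ${\bf E}^{(\beta)}\ln^2\kappa_1<\infty$ and ${\bf E}^{(\beta)}\xi^2<\infty$ are used here. By Chebyshev/LLN-type bounds, $\sum_k\ln\kappa_k$ is $O(\sqrt n)$ with high probability, so the union-bound terms of the form $\kappa_k Y^{\widetilde h-h}$ stay summable on a set of environments of probability bounded below. We can also choose $N=N(n)$ growing like $e^{C\sqrt n}$ if needed. Reaching such a level costs only $e^{-O(\sqrt n)}$, which is still subexponential.

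The main obstacle is this last step: making the survival estimate uniform along environment paths with zero drift, where $S_k$ fluctuates by order $\sqrt n$ and $\ln\kappa_k$ sums are of order $\sqrt n$. My first attempt will be to condition on $\min_{k\le n}S_k\ge -x$ and $\sum_k\ln^+\kappa_k\le \sqrt n\,x$, and to start from level $e^{Cx}$ with $x\asymp\sqrt n$. All resulting costs are $e^{-O(\sqrt n)}$, hence $\lim n^{-1}\ln{\bf P}(Y_n>0)\ge \ln R(\beta)$ and $\rho\ge R(\beta)$.
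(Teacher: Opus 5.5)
Your outline has the same architecture as the paper's proof: tilt to ${\bf P}^{(\beta)}$, reach a large state by irreducibility, control $Y_ne^{-S_n}=Y_0+\sum_kB_ke^{-S_k}$ by Markov's inequality with some $h<1$ and (\ref{BnCond}), and pay only a subexponential price for the environment. But the step you flag as the main obstacle is the actual content of the lemma, and your sketch of it does not go through. Starting from $Y_0=l$, the quenched probability that $\sum_k|B_k|e^{-S_k}$ reaches $l/2$ before time $n$ is at most $c\,l^{\widetilde h-h}\sum_{k\le n}\kappa_ke^{-(h-\widetilde h)S_{k-1}}$ (stop at the first crossing, as in (\ref{SumOfBi})).

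(i) On $\{\min_{k\le n}S_k\ge -x\}$ this is only $\le c\,l^{\widetilde h-h}e^{(h-\widetilde h)x}\sum_{k\le n}\kappa_k$, which grows with $n$. So from a fixed start $N$ you do not get survival probability $\ge 1/2$ uniformly in $n$. For that, the walk must be forced to rise along a quantitative lower envelope, and you must show this has non-exponentially-small probability; nothing in the proposal does this. Your alternative per-step union bound over ${\bf P}(|B_k|>Y_{k-1}A_k/2)$ fails outright: it only yields $Y_k\ge Y_0e^{S_k}2^{-k}$, so its terms $\kappa_kY_{k-1}^{\widetilde h-h}$ grow like $2^{(h-\widetilde h)k}$.

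(ii) The control of $\kappa$ targets the wrong quantity. $\sum_k\ln\kappa_k$ is of order $n$, not $\sqrt n$, unless ${\bf E}^{(\beta)}\ln\kappa_1=0$. A constraint $\sum_{k\le n}\ln^+\kappa_k\le \sqrt n\,x\asymp n$ still allows a single $\kappa_k$ of size $e^{cn}$. The Markov bound involves $\sum_k\kappa_k$, so what you need is $\max_{k\le n}\ln^+\kappa_k\le\varepsilon\sqrt n$. This holds with probability tending to one, because $n{\bf P}^{(\beta)}(\ln^+\kappa_1>\varepsilon\sqrt n)\to0$ when ${\bf E}^{(\beta)}\ln^2\kappa_1<\infty$.

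(iii) The claim that reaching level $e^{C\sqrt n}$ costs only $e^{-O(\sqrt n)}$ is asserted, not proved. Irreducibility gives ${\bf P}(Y_m\ge N)>0$ only for fixed $N$, with no rate as $N\to\infty$. Getting $Y$ up there requires exactly the ``$Y$ follows $e^{S}$'' estimate that is missing.

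The paper supplies this mechanism with a fixed start $l$. It requires $S_j\ge wj^{1/2-\delta'}$ and $\kappa_j\le w^{-1}e^{j^{1/2-2\delta'}}$ for all $j$, which makes the perturbation series converge, so the quenched survival probability is at least $1/2$ once $l\ge L$. It then shows that the ${\bf P}^{(\beta)}$-probability of this environment event together with $S_{n-k}\le n^{2/3}/2$ is at least $c/n^{2/3}$, using the measure ${\bf P}^+$ of the walk conditioned to stay positive and $U(x)\sim c_1x$. The tilt factor is handled by $e^{-\beta S_n}\ge e^{-\beta n^{2/3}}$.

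Your own plan can be closed more elementarily by an explicit boost phase:
\begin{itemize}
\item Since $R'(\beta)=0$, $\xi$ is nondegenerate with ${\bf E}^{(\beta)}\xi_1=0$, so $p:={\bf P}^{(\beta)}(a\le\xi_1\le b,\ \kappa_1\le K)>0$ for some $0<a<b$ and $K$.
\item Run $m=\lceil C\sqrt n\rceil$ such steps, started from a fixed large $l$ reached at a fixed time. The perturbation series is at most $K\sum_{j\ge1}e^{-(h-\widetilde h)a(j-1)}$, so with quenched probability at least $1/2$ the chain ends the boost above $\frac l2e^{am}$. The environment cost is $p^m=e^{-O(\sqrt n)}$.
\item From the end $t$ of the boost, restart on $\{\min_{t\le k\le n}(S_k-S_t)\ge-\sqrt n,\ S_n-S_t\le\sqrt n,\ \max_{k\le n}\ln^+\kappa_k\le\varepsilon\sqrt n\}$, whose probability is bounded below by Donsker's theorem.
\item For $C$ large the Markov bound is then $o(1)$, and $e^{-\beta S_n}\ge e^{-O(\sqrt n)}$.
\end{itemize}
This gives ${\bf P}(T>n)\ge R(\beta)^ne^{-O(\sqrt n)}$ without fluctuation theory for conditioned walks. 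As written, however, your proposal leaves this step open.
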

Thus, under the conditions of Lemma~\ref{LemBetaRho} we have two possible situations: $\rho=\rho^+$ (not strongly subcritical) and $\rho>\rho^+$ (strongly subcritical). In BPRE theory the first case is divided into the weakly and intermediate subcritical cases. It seems that in the general case the weakly subcritical case corresponds to 
$$
\sum_{n=1}^{\infty} (\rho^+)^{-n} {\bf P}(Y_n>0) < +\infty
$$
and the intermediate subcritical case corresponds to 
$$
\sum_{n=1}^{\infty} (\rho^+)^{-n} {\bf P}(Y_n>0) = +\infty.
$$
We plan to study these two cases more carefully in future works.

Note that if $\rho^+=0$, then $\rho>\rho^+=0$ by definition and the process is strongly subcritical. Particularly, it's true in the case $\xi=const$. 

By continuity of $R(\cdot)$ there exists $h_{+}^*$ such that 
$$R(h_{+}^*)=\rho,\ R'(h_{+}^*)<0.
$$

The main result of this section is the following.
\begin{thm}
\label{YnSub}
Let $\{Y_n\}$ be a strongly subcritical positive zero-absorbing MRSRE with $\rho\in (0,1)$. Assume that~(\ref{BnCond}) and ${\bf E}^{(h)}\kappa<\infty$ holds for some $h>h^*_+$. Then 
\begin{enumerate}
\item \begin{equation}
\label{YnSubTh}
{\bf P}(Y_n>0|Y_0=i)\sim f^*_i \rho^n,\quad {\bf P}(Y_n=k|Y_n>0)\to p_k^*,\ n\to\infty,
\end{equation}
where $\{p_k^*\}$ is a distribution, and $\{f^*_i\}$ is a bounded sequence such that
$$
\sum_{j=1}^{\infty} p_{i,j} f^*_j=\rho f^*_i,\ i>0,\
\sum_{i=1}^{\infty} p_i^* p_{i,j}=\rho p^*_i,\ j>0,\
\sum_{i=1}^{\infty} p_i^* f^*_i=1;
$$
\item for any $h>h^*_+$ 
$$
{\bf P}(Y_n>0|Y_0=i)\le c i^h \rho^n,\ i>0,
$$
where $c=c(h)$ is some constant.
\item For any positive $h$ such that $R(h)<\rho$ and~(\ref{BnCond}) holds, the distribution $\{p_k^*\}$ has a finite $h$-th moment.
\end{enumerate}
\end{thm}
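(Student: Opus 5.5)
The plan is to verify conditions QS1)--QS3) of Theorem~\ref{MainQSTh} for the chain $X_n=Y_n$ with $R=\rho^{-1}$, taking $K=\{1,\dots,N\}$ for a large $N$. Parts 1) and 3) of Theorem~\ref{YnSub} then follow from conclusions 1)--4) of Theorem~\ref{MainQSTh}, with $f^*=f$ and $p^*=\pi$. Boundedness of $f^*$ and part 2) will come from a Lyapunov-type estimate. The basic tool is a supermartingale bound for $Y_n^{h}$. Fix $h>h^*_+$, close enough to $h^*_+$ that $R(h)<\rho$; this is possible since $R'(h^*_+)<0$. Also take $h\le \widehat h$, so that~(\ref{BnCond}) and ${\bf E}^{(h)}\kappa<\infty$ hold. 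For $h\le1$ subadditivity gives
\[
{\bf E}(Y_{n+1}^h\mid Y_n)\le {\bf E}e^{h\xi}\,Y_n^h+{\bf E}(\kappa e^{h\xi})\,Y_n^{\widetilde h}.
\]
For $h>1$ I would expand $(A Y+B)^h$, or apply Theorem~\ref{BahrTh} to the sum of the $B$-increments after dividing by $e^{S_n}$; in either case the conditional expectation is $R(h)Y_n^h+O(Y_n^{h-\delta})$ for some $\delta>0$. Because $\widetilde h<h$, for $Y_n>N$ with $N$ large we get
\[
{\bf E}(Y_{n+1}^h\mid Y_n)\le (R(h)+\varepsilon')Y_n^h \quad\text{with } R(h)+\varepsilon'<\rho,
\]
while on $K$ the conditional expectation is bounded by a constant $M$.

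\textbf{QS1).} With $V(y)=y^h$, the process $V(Y_{n\wedge\tau})(R(h)+\varepsilon')^{-n\wedge \tau}$ is a supermartingale, where $\tau$ is the hitting time of $K\cup\{0\}$. Starting from $j\in K$, one step costs at most the constant $M$. Since $V\ge1$ off $\{0\}\cup K$, we get
\[
{\bf P}(Y_i\notin\{0\}\cup K,\ 1\le i\le n)\le M (R(h)+\varepsilon')^{n-1},
\]
which is QS1) with $R+\varepsilon=(R(h)+\varepsilon')^{-1}>\rho^{-1}$.

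\textbf{QS2) and QS3).} QS3) is immediate with $K_1=K$ and $n_0=0$. Alternatively, it follows from irreducibility on the finite set $K$: each $i\in K$ reaches a fixed state $j$ in a bounded number of steps with probability bounded below. The same irreducibility gives QS2) on the finite set $K$. Indeed, ${\bf P}(T>n\mid i)$ is bounded above by $\sum_{m\le n}$ of the probability of staying outside $K$ until time $m$, then hitting $j$'s neighbourhood, then surviving. Because $K$ is finite and aperiodic-irreducible, ${\bf P}(T>n\mid i)\le c\,{\bf P}(T>n-m_0\mid j)$, and a backward comparison ${\bf P}(T>n-m_0\mid j)\le c'{\bf P}(T>n\mid j)$ holds since $j\to j$ in $m_0$ steps with positive probability.

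\textbf{Part 2) and boundedness of $f^*$.} Decompose survival from $i$ according to the first entrance $\tau$ into $K$. We have ${\bf P}(\tau>m\mid i)\le i^h(R(h)+\varepsilon')^m$, and after entrance the survival probability is at most $C\rho^{n-m}$, using conclusion 3) of Theorem~\ref{MainQSTh} made uniform over the finite set $K$. This gives
\[
{\bf P}(T>n\mid i)\le \sum_m i^h(R(h)+\varepsilon')^m C\rho^{n-m}+i^h(R(h)+\varepsilon')^n\le c\,i^h\rho^n,
\]
because the ratio $(R(h)+\varepsilon')/\rho<1$ makes the sum geometric. For $h$ larger, but still satisfying $h>h^*_+$, the same argument works, possibly with $R(h)\ge\rho$ only through the trivial monotonicity $i^{h'}\le i^h$; so it suffices to prove the bound for $h$ near $h^*_+$. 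Boundedness of $f^*$ is a refinement of the same bound: survival from a large $i$ requires, before entering $K$, a path along which $Y$ decays roughly like $i e^{S_m}$. Bounding that uniformly is the step I expect to be the main obstacle. I would first try a second Lyapunov function, such as $V(y)=\min(y^h,\cdot)$ for small $h$, together with the fact that $f$ is harmonic. Failing that, I would use $f^*_i=\lim \rho^{-n}{\bf P}(T>n\mid i)$ and control the entrance time into $K$ via the random walk $S_n$ under ${\bf P}^{(h)}$.

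\textbf{Part 3).} For $h$ with $R(h)<\rho$, apply the drift inequality to $\pi=p^*$, which satisfies $\pi P_+=\rho\pi$. Formally,
\[
\rho\sum_k p_k^*k^h=\sum_k p_k^*{\bf E}(Y_1^h\mid k)\le (R(h)+\varepsilon')\sum_k p^*_k k^h+M,
\]
hence $\sum_k p_k^* k^h\le M/(\rho-R(h)-\varepsilon')$. To make this rigorous I would apply the inequality to the truncation $\min(k^h,L)$ and let $L\to\infty$.
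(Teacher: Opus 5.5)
Your verification of QS1 (the Lyapunov function $y^h$ with $h$ slightly above $h^*_+$, so that $R(h)<\rho$), of QS3, and of part 2 (first-entrance decomposition into $K$ and a geometric sum with ratio $(R(h)+\varepsilon')/\rho$) is the paper's argument. Your QS2 justification runs in the wrong direction: paths from $i$ to $j$ give lower, not upper, bounds on $\mathbf{P}(T>n\mid Y_0=i)$. The paper's one-line argument takes $l$ with $p=\mathbf{P}(Y_l=i\mid Y_0=j)>0$. Then $\mathbf{P}(T>n\mid Y_0=j)\ge p\,\mathbf{P}(T>n-l\mid Y_0=i)\ge p\,\mathbf{P}(T>n\mid Y_0=i)$, uniformly over the finite set $K$. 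There are two real problems.

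\textbf{Part 3: the truncation does not work.} With $V_L(y)=\min(y^h,L)$ you need $\sum_j p_{k,j}V_L(j)\le (R(h)+\varepsilon')V_L(k)$ for $k>N$. This fails for $k^h>L$. As $k\to\infty$, $Y_1/k\to A_1=e^{\xi_1}>0$ in probability, since $\mathbf{P}(|B_1|>\epsilon k\mid Y_0=k)=O(k^{\widetilde h-h})$. Hence $\sum_j p_{k,j}V_L(j)\to L$, which exceeds $(R(h)+\varepsilon')L$ and even $\rho L$. Substituting into $\rho\sum_k p^*_kV_L(k)=\sum_k p_k^*\sum_j p_{k,j}V_L(j)$ leaves an error term of order $(1-\rho)L\,p^*\{k:\,k^h>L\}$. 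Its boundedness is a weak $h$-th moment of $p^*$, i.e.\ essentially what you want to prove.

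A repair using only what you already have: by conclusion 4) of Theorem~\ref{MainQSTh} and Fatou,
\[
\sum_j p^*_j j^h\le \liminf_{n\to\infty}\frac{\mathbf{E}(Y_n^h;\,T>n\mid Y_0=1)}{\mathbf{P}(T>n\mid Y_0=1)}.
\]
Here $u_n=\mathbf{E}(Y_n^h;T>n\mid Y_0=1)$ satisfies $u_{n+1}\le (R(h)+\varepsilon')u_n+M\,\mathbf{P}(T>n\mid Y_0=1)\le (R(h)+\varepsilon')u_n+MC\rho^n$. Hence $u_n=O(\rho^n)$, while the denominator is $\sim f^*_1\rho^n$ with $f^*_1>0$. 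The paper instead applies a moment criterion for $\rho$-invariant measures (Theorem 2 of the work it cites), with $g(j)=j^h$ and $f(i)=(1-R(h)/\rho-\varepsilon)i^h$. Its hypothesis is exactly your drift inequality, and the passage from drift to moment is done by that theorem, not by truncation.

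\textbf{Boundedness of $f^*$.} Do not look for a proof. The paper never proves it: its part ii yields only $f^*_i\le c\,i^h$, the same as your part 2. Moreover, it is false in general. Take a subcritical Galton--Watson process with $p_0,p_1,p_2>0$, mean $m<1$ and $\mathbf{E}X^h<\infty$ for some $h>1$. It satisfies all hypotheses: $\xi\equiv\ln m$, $\rho^+=0$, $\rho=m$, $h^*_+=1$. Independence of the $i$ initial lines gives $\mathbf{P}(Z_n>0\mid Z_0=i)=1-\bigl(1-\mathbf{P}(Z_n>0\mid Z_0=1)\bigr)^i$, hence $f^*_i=i\,f^*_1$, which is unbounded. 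The polynomial bound from part 2 is the correct statement about $f^*$.
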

\begin{remark}
\label{QSDSub}
By Theorem~\ref{QSDForm} under the conditions of~Theorem~\ref{YnSub} 
$$
\pi_i = \frac{1-\rho}{{\bf E} \left(\left.\rho^{1-T_{i}}T_i; T_i<T_0\right|X_0=i\right)}, i\in \mathbb{N},
$$
where $T_i$ is the first return to the state $i$.
\end{remark}
\begin{remark}
\label{TrajectorySub}
Under the conditions of Theorem~\ref{YnSub} for any $0<t_1<t_2<\dotsc<t_k<1$ we have
$$
\lim_{n\to\infty} {\bf P}(Y_{[nt_1]}=i_1,\dotsc, Y_{[nt_k]}=i_k|T>n)\to p_{i_k}^* \prod_{l=1}^{k-1} p_{i_l}^* f_{i_l}^*.
$$
\end{remark}
\begin{remark}
\label{AllTrajectory}
Under the conditions of Theorem~\ref{YnSub} we have
$$
\frac{f^*_{i} {\bf P}(\{Y_j,\ j\le n-1\}\in A, Y_n=i|T>n)}{{\bf P}(\{Y^*_j,\ j\le n-1\}\in A, Y^*_n=i)}\to 1,\ n\to\infty,
$$
uniformly in $A\in \mathbb{N}^{n-1}$, $i\in \mathbb{N}$, where $\{Y^*\}$ is a positively recurrent Markov chain with the transition probabilities $p_{i,j}^*=p_{i,j} f^*_j/(\rho f^*_i)$ and $Y_0=Y_0^*$.
\end{remark}
The main weakness of the theorem is that $\rho$ is unknown and it is hard to compare it with $\rho^+$. We give the following representation of $\rho$. 
\begin{lem}
\label{RhoRepresentation}
i) The process is strongly subcritical iff for some positive $\delta$ and natural $i$
\begin{equation}
\label{ProbPowerSeries}
\sum_{n=1}^{\infty} {\bf P}(Y_n>0|Y_0=i) s^n = +\infty
\end{equation}
for some $s: s\rho^+<1$. Moreover, $1/\rho$ is the radius of convergence of series~(\ref{ProbPowerSeries}).\\
ii) Let $1/\rho^*(h)$ be the radius of convergence of the series
\begin{equation}
\label{ExpPowerSeries}
\sum_{n=1}^{\infty} {\bf E}(Y_n^h|Y_0=i) s^{n},\  h\in (0,\widehat{h}).
\end{equation}
Let $h^*_{++}$ be the solution of the equation
$$
R(h) = \rho,\ R'(h)>0,
$$
where if there's no solution, we say that $h^*_{++}=+\infty$. Then
$$\rho^*(h)=\left\{
\begin{array}{cc}
\rho,& h<h^*_{++},\\
R(h),& h\in [h^*_{++},\widehat{h}).
\end{array}
\right.
$$ 
\end{lem}
\begin{figure}
\begin{center}
\includegraphics[width=250pt]{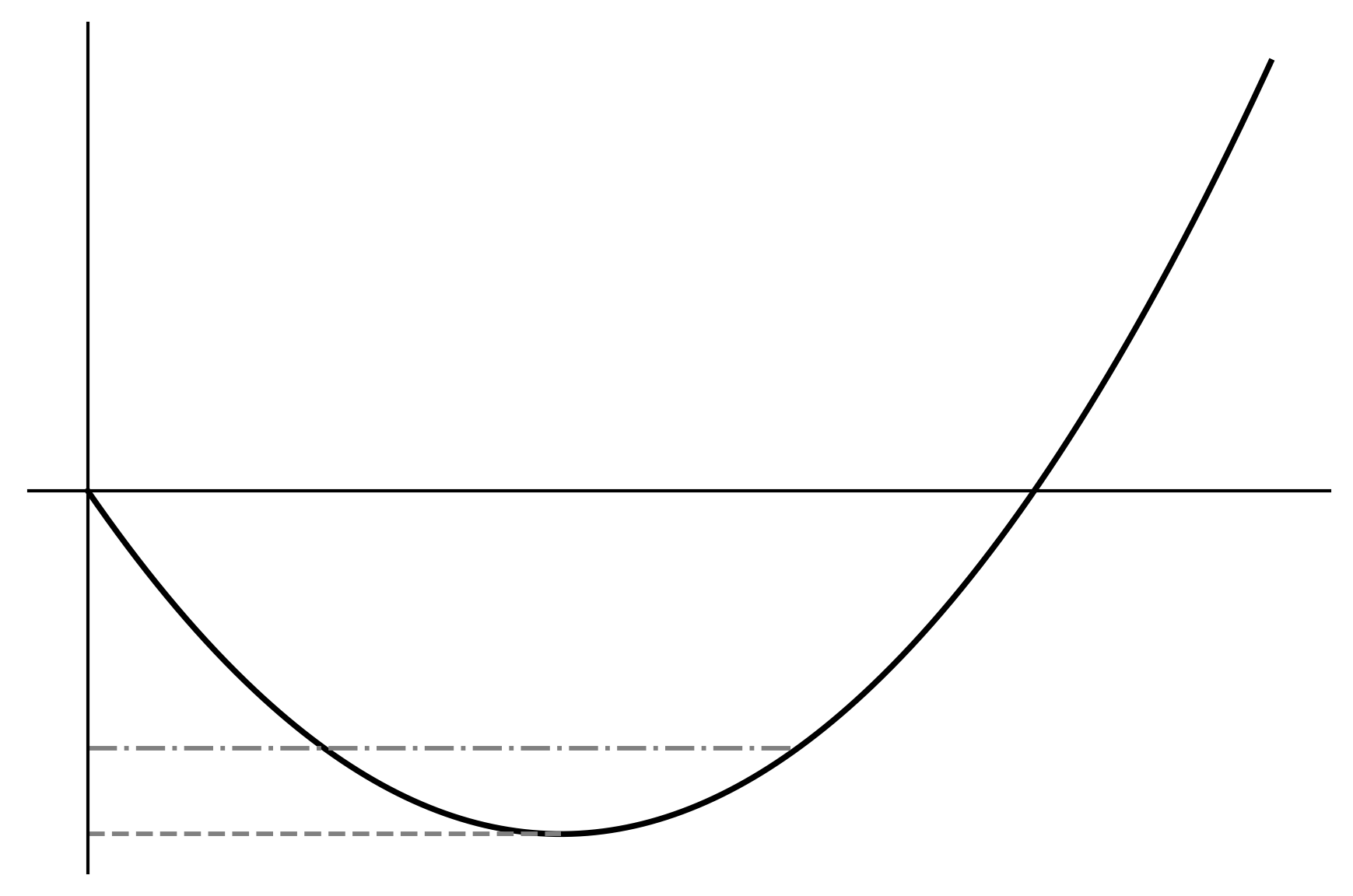}
\caption{The solid line represents the graph of $\ln R(h)$, the dashed line represents $\ln \rho^*(h)$ for $h\le \beta$ in the non-strictly subcritical case (for $h>\beta$ it equals $\ln R(h)$), and the dash-dotted line represents $\ln \rho^*(h)$ for $h\le h^*_{++}$ (for $h>h^*_{++}$ it equals $\ln R(h)$).}
\label{RhoPlot}
\end{center}
\end{figure}

In practice it is usually hard to estimate $\rho$ using series like~(\ref{ProbPowerSeries}) and ~(\ref{ExpPowerSeries}). 
Let $P^+_{M}=(p_{i,j},\ 1\le i,j\le M)$ and $\lambda_M$ be the Perron eigenvalue of $P^+_M$. By Theorem 3.1 of~\cite{SenetaFinite} we can state that $\lambda_M\uparrow \rho$ as $M\to\infty$.  Thus, in practice we can estimate $\rho$ by $\lambda_M$ from below and find M such that $\lambda_M>\rho^+$. Then, the process is strongly subcritical. 
\subsection{Proof of Lemma~\ref{LemBetaRho}}
\begin{proof}
To prove the Lemma it is enough to show that for any positive $\delta$ and some $c$ we have
\begin{equation}
\label{BetaEnough}
{\bf P}(T>n|Y_0=1)\ge c (1-\delta)^n R(\beta)^n, 
\end{equation}
since then
$$
\sum_{n=1}^{\infty} {\bf P}(T>n|Y_0=1) (R(\beta) (1-2\delta))^{-n} = +\infty.
$$
Fix $n$ and note that 
\begin{eqnarray*}
R(\beta)^{-n} {\bf P}(T>n|Y_0=1) = {\bf P}^{(\beta)} \left({\bf P}_{\boldsymbol\eta}(Y_n>0|Y_0=1) e^{-\beta S_n}\right)\ge \\
e^{-\beta n^{2/3}} {\bf P}^{(\beta)}\left(Y_n>0, S_n\le n^{2/3}|Y_0=1\right).
\end{eqnarray*}
For any natural $k$ 
\begin{equation}
\label{YnRec}
Y_n = Y_k e^{S_n-S_k} + \sum_{i=1}^{n-k} B_{k+i} e^{S_n-S_{k+i}}.
\end{equation}
Therefore, for any natural $k$, $l$ we have
\begin{eqnarray}
\label{LowerBoundYn>0}
{\bf P}^{(\beta)}\left(\left.Y_n>0, S_n\le n^{2/3}\right|Y_0=1\right) \ge {\bf P}^{(\beta)}\left(\left.l+\sum_{i=0}^{n-k} B_i e^{-S_i}>0, S_{n-k}\le n^{2/3}/2\right|Y_0=l\right)\times \\ 
{\bf P}^{(\beta)}(Y_k=l, S_k\le n^{2/3}/2)  \ge {\bf E}^{(\beta)}\left({\bf P}_{\boldsymbol\eta}\left(\left.\sum_{i=1}^{n-k} |B_i| e^{-S_i}<l/2\right|Y_0=l\right)\times \right.\nonumber\\
\left.I_{S_{n-k}\le n^{2/3}/2, S_j\ge u_{j}(w), \kappa_j \le v_{j}(w), j\le n-k}\right) {\bf P}^{(\beta)}(Y_k=l, S_k\le n^{2/3}/2),\nonumber
\end{eqnarray}
where for some positive $w$, some $\delta'$ and all $0\le j\le n-k$ we denote
$$
u_{j}:=u_j(w)=w j^{1/2-\delta'},\quad
v_{j}:=v_j(w)=w^{-1} e^{j^{1/2-2\delta'}}.
$$
The probability
$$
{\bf P}_{\boldsymbol\eta}\left(\left.\sum_{i=1}^{n-k} |B_i| e^{-S_i}<l/2\right|Y_0=l\right)
$$
 can be represented in the form
\begin{eqnarray*}
1 - \sum_{t=1}^{n-k-1} {\bf P}_{\boldsymbol\eta}\left(\left.\sum_{i=1}^{j} |B_i| e^{-S_i}<l/2, j<t, \sum_{i=1}^{t} |B_i| e^{-S_i}\ge l/2\right|Y_0=l\right)\ge \\
1 - \sum_{t=1}^{n-k-1} {\bf P}_{\boldsymbol\eta}\left(\sum_{i=1}^{t} |B_i| e^{-S_i}\ge l/2, Y_{i}\in \left(l e^{S_i}/2, 3l e^{S_i}/2\right), i< t\right),
\end{eqnarray*}
since by~(\ref{YnRec})
$$
\left|Y_i e^{-S_i} - Y_0\right|\le \sum_{j=1}^{i} |B_j| e^{-S_j},\ i\ge 0.
$$
By the Markov inequality for any $h\in (0,1)$
\begin{eqnarray}
\label{SumOfBi}
{\bf P}_{\boldsymbol\eta}\left(\sum_{i=1}^{t} |B_i| e^{-S_i}\ge l/2; Y_{i}\in \left(\frac{l}{2} e^{S_i}, \frac{3l}{2} e^{S_i}\right),\ i<t\right)\le \left(\frac 2l\right)^{h}\times \\ \sum_{i=1}^{t} e^{-hS_i} {\bf E}_{\boldsymbol\eta}\left({\bf E}_{\boldsymbol\eta}\left(|B_i|^h|Y_{i-1}\right) ; Y_{i-1}\in \left(\frac{l}{2} e^{S_{i-1}}, \frac{3l}{2} e^{S_{i-1}}\right)\right).\nonumber
\end{eqnarray}
Here we use that 
$$\sum_{i=1}^{t} b_i^h \ge \left(\sum_{i=1}^{t} b_i\right)^h,\ b_i>0,\ h\in (0,1).
$$
By~(\ref{BnCond}) for some $h\in (0,1)$ we have
$$
{\bf E}_{\boldsymbol\eta}\left(|B_i|^h|Y_{i-1}\right)\le 
Y_{i-1}^{\widetilde{h}} e^{\xi_i h} \kappa_i.
$$
Thus, the right-hand side of~(\ref{SumOfBi}) is less than
$$
c l^{\widetilde{h}-h} \sum_{i=1}^{t} e^{-(h-\widetilde{h})S_{i-1}} \kappa_i
$$
for some positive $c$. 
Therefore, by~(\ref{LowerBoundYn>0})
\begin{eqnarray*}
{\bf P}^{(\beta)}\left(\left.Y_n>0, S_n\le n^{2/3}\right|Y_0=1\right) \ge 
 \left(1 - c \sum_{t=1}^{n-k} l^{\widetilde{h}-h}  \sum_{i=1}^{t} e^{-(h-\widetilde{h}) u_{i-1}(w)} v_{i}(w)\right) \times \\
 {\bf P}^{(\beta)}(Y_k=l, S_k\le n^{2/3}/2|Y_0=1)
{\bf P}^{(\beta)}\left(S_n\le n^{2/3}/2, S_j\ge u_{j}(w), \kappa_j \le v_{j}(w)\right).
\end{eqnarray*}
By the definition of $u_{\cdot}$, $v_{\cdot}$ the sequence
$$
 \sum_{t=1}^{\infty} \sum_{i=1}^{t} e^{-(h-\widetilde{h}) u_{i-1}(w)} v_{i}(w)
 $$
 is bounded from above. Therefore, taking $L$ large enough, we obtain for ${\bf P}^{(\beta)}(Y_n>0, S_n\le n^{2/3}|Y_0=1)$, $n\ge k$, the lower bound 
\begin{eqnarray}
\label{BetaRhoBound} 
\frac{1}{2}{\bf P}^{(\beta)}(Y_k=l, S_k\le n^{2/3}/2|Y_0=1) \times\\ {\bf P}^{(\beta)}\left(S_{n-k}\le (n-k)^{2/3}/2, S_j\ge u_{j}(w), \kappa_j \le v_{j}(w),\ j\le n\right|Y_0=1),\ l\ge L.\nonumber
\end{eqnarray}
If we take $k$ large enough, then ${\bf P}(Y_k=l)>0$ for some $l>L$, so ${\bf P}^{(\beta)}(Y_k=l)$ is positive as well. By continuity of measure 
the first probability in~(\ref{BetaRhoBound}) is bounded from below by some positive number for large enough $n$. Thus, we need to show that the second probability in~(\ref{BetaRhoBound}) is not exponentially small in $n$.

Let 
$$
U(x)=I_{x\ge 0} + \sum_{n=1}^{\infty} {\bf P}^{(\beta)}(S_n\ge -x, S_i<0, i\le n).
$$
Note that $U(x)\sim c_1 x$, $x\to+\infty$, for some positive $c_1$ (see~\cite{VatKerst}, Subsection 4.4.3).
Introduce the measure 
$$
{\bf P}^+(A) = {\bf E}^{(\beta)}\left(U(S_n) I_{A}; S_i>0, i\le n\right)
$$
on $\sigma(\eta_1,\eta_2,\dotsc, \eta_n)$. In Section 5.2 of~\cite{VatKerst} it is proved that ${\bf P}^+$ can be extended to $\sigma(\boldsymbol\eta)$. So, 
\begin{eqnarray*}
{\bf P}^{(\beta)}\left(S_{n-k}\le (n-k)^{2/3}/2, S_j\ge u_{j}(w), \kappa_j \le v_{j}(w)\right)\ge \\
\frac{1}{U(n^{2/3})} {\bf P}^{+}\left(S_{n-k}\le (n-k)^{2/3}/2, S_i\ge u_{i}(w), \kappa_i \le v_{i}(w), i>0\right).
\end{eqnarray*}
It is easy to see that
\begin{eqnarray}
\label{P+ineq}
{\bf P}^{+}\left(S_{n-k}\le (n-k)^{2/3}, S_i\ge u_{i}(w), \kappa_i \le v_{i}(w), i>0\right)\ge 1 - {\bf P}^+\left(S_{n-k}>(n-k)^{2/3}\right) - \\
{\bf P}^+\left(\inf \frac{S_i}{i^{1/2-\delta'}}<w\right) -
{\bf P}^+\left(\sup \frac{\zeta_i}{\exp(i^{1/2-2\delta'})}>\frac{1}{w}\right).\nonumber
\end{eqnarray}
The first probability in the right-hand side of~(\ref{P+ineq}) is $o(1)$ as $n\to\infty$ by Theorem 2.1 of \cite{Doney}.
If the random variables
$$
\inf \frac{S_i}{i^{1/2-\delta'}},\ \sup \frac{\zeta_i}{\exp(i^{1/2-2\delta'})}
$$
are ${\bf P}^+$-finite, then for a large enough $w$ the right-hand side of~(\ref{P+ineq}) is greater than 1/2. 
The first variable is finite by (5.17) of~\cite{VatKerst}, the second is finite due to the Borel-Cantelli lemma (see the proof of Lemma 5.5 of~\cite{VatKerst}). 

Thus, the second term in~(\ref{BetaRhoBound}) is greater than $c_2/n^{2/3}$ for some positive $c_2$, and~(\ref{BetaEnough}) is true.
 This proves the lemma.
\end{proof}
\subsection{The proof of Theorem~\ref{YnSub}}
\begin{proof}
i) We only need to check the conditions QS1-QS3. 

Let us start with QS1. Let $K=\{1,2,\dotsc, k\}$ for some $k\in \mathbb{N}$.  Let $\delta<h$ be positive real numbers such that $\widetilde{h}(h)<h-\delta$. Assume that $Y_0=j$ a.s.
Then
$$
{\bf P}(Y_i>k, 1\le i\le n)\le k^{-h} {\bf E}\left(Y_n^{h}I_n\right),
$$
where $I_n = I_{Y_i>k, i\le n}$.
For $h>1$ we have
$$
a_{n}:=\left({\bf E}Y_n^{h}I_n\right)^{1/h}\le \left({\bf E}A_n^h Y_{n-1}^{h}I_{n-1}\right)^{1/h}+
\left({\bf E}(|B_n|^h I_{n-1}\right)^{1/h}.
$$
For $h\in (0,1]$ we have 
$$
a_{n}:={\bf E}Y_n^{h}I_n\le {\bf E}A_n^h Y_{n-1}^{h}I_{n-1}+
{\bf E}|B_n|^h I_{n-1}.
$$
By~(\ref{BnCond}) we have
$$
{\bf E}\left(|B_n|^h I_{n-1}\right)\le {\bf E}\kappa_n e^{h\xi_n} {\bf E} Y_{n-1}^{\widetilde{h}(h)} I_{n-1}\le R(h) k^{-\delta} {\bf E}^{(h)}\kappa_n {\bf E}Y_{n-1}^h I_{n-1}.
$$
Thus, for $h>1$ we have
\begin{equation}
\label{anIneq}
a_n\le a_{n-1} R(h)^{1/h} \left(1+ \left( k^{-\delta}{\bf E}^{(h)}\kappa_n\right)^{1/h}\right).
\end{equation}
For $h\in (0,1]$, similarly,
\begin{equation}
\label{anIneqh<1}
a_n:={\bf E}Y_n^{h}I_n \le a_{n-1} R(h) \left(1+ k^{-\delta}{\bf E}^{(h)}\kappa_n\right).
\end{equation}

Let $h=h^*_++\delta$, where $\delta>0$ is small enough that $R(h)$ is decreasing on $(h^*_+, h)$ and $\widetilde{h}(h)\le h^*_{+}$. Then for $h\ge 1$ and large enough $k$ we have
$$
R(h)^{1/h} \left(1+ \left( k^{-\delta}{\bf E}^{(h)}\kappa_n\right)^{1/h}\right)\le R\left(h-\frac{\delta}{2}\right)^{1/h},
$$
and by~(\ref{anIneq})
$$
a_n\le R\left(h-\frac{\delta}2\right)^{1/h} a_{n-1}\le j \left(1+j^{-\delta} {\bf E}^{(h)}\kappa_1\right)^{1/h} R\left(h-\frac{\delta}2\right)^{n/h}\le C j R\left(h-\frac{\delta}2\right)^{n/h},
$$
where $C$ is some constant. Similarly, for $h\in (0,1]$ we have
$$
a_n \le C j^h R\left(h-\frac{\delta}2\right)^n.
$$

Thus, we have 
\begin{eqnarray*}
{\bf P}(Y_i>k, i\le n|Y_0=j)\le k^{-h} j^h R\left(h^*_++\frac{\delta}2\right) ^{n}.
\end{eqnarray*}
Since $R(h^*_++\delta/2)<R(h^*_{+})$, we obtain the desired estimate. 

To prove QS2, note that for any $i,j\le k$ there exists $l$ such that
$$
p={\bf P}(Y_{l}=j|Y_0=i)>0
$$
Thus,
$$
{\bf P}(T>n|Y_0=i) \ge p {\bf P}(T>n-l|Y_0=j)
$$
and
$$
{\bf P}(T>n|Y_0=j)\le \frac{1}{p} {\bf P}(T>n+l|Y_0=i) \le \frac1p {\bf P}(T>n|Y_0=i).
$$

Finally, QS3 is a simple corollary of irreducibility, since for any $n_0$ and large enough $k_1$
$$
{\bf P}(\exists j\le n_0:\ 1\le Y_j\le k_1|Y_0=i)>0
$$
for all $i\le k$.

Thus, (\ref{YnSubTh}) holds by Theorem~\ref{MainQSTh}. 

ii) Finally, let $\tau_n=\min (j: 0<j\le n: Y_j\le k)$, where if $Y_j>k$ for all $0<j\le n$, then we set $\tau_n=n+1$. Then 
\begin{eqnarray*}
{\bf P}(T>n|Y_0=i) = \sum_{j=1}^{n} \sum_{l=1}^{k} {\bf P}(Y_j=l, \tau_n=j|Y_0=i)
{\bf P}(T>n-j|Y_0=l) + {\bf P}(\tau_n>n|Y_0=i). 
\end{eqnarray*}
From QS1 we know that the second term is less than or equal $c i^h (\rho-\delta)^n$ for some positive $c$, $\delta$. By 
(\ref{YnSubTh}) we have
$$
{\bf P}(T>n|Y_0=l)<c_1 \rho^n 
$$
for some $c_1$ and all $l\in \{1,2, \dotsc, k\}$. On the other hand, 
$$
\sum_{j=1}^{n} {\bf P}(Y_j=l, \tau_n=j|Y_0=i) {\bf P}(T>n-j|Y_0=l)\le c_2 i^h \rho^{n-j} (\rho-\delta)^j
$$
for some positive $c_2$. Thus, for any $i$ we have
$$
{\bf P}(T>n|Y_0=i)\le c_3 \rho^n i^h.
$$

iii) Now, let us prove that $\{p_i^*\}$ has a finite $h$-th moment. We use nice Theorem 2 from~\cite{SenetaMoments} for the  particular case of a finite set $A$. In our case it can be stated as follows: if for some non-negative functions $f(j)$, $g(j)$ and some natural $k\ge 2$ we have 
\begin{equation}
\label{SenetaCond}
\sum_{j=k}^{\infty} p_{i,j} g(j)\le \rho(g(i)-f(i)),\ i\ge k,\quad \sum_{j=k}^{\infty} p_{i,j} g(j)<\infty,\ i<k,
\end{equation}
then 
$$
\sum_{j=1}^{\infty} p_i^* f(i)<+\infty.
$$
Let $g(j)=j^h$. Then for all natural $i$
$$
{\bf E}(g(Y_1)|Y_0=i)^{1/h}\le \left(i^h {\bf E}A_1^h\right)^{1/h}+\left({\bf E}(|B_1|^h|Y_0=i)\right)^{1/h}\le
i R(h)^{1/h} + \left({\bf E}\kappa_1 e^{\xi_1 h}\right)^{1/h} i^{\widetilde{h}/h}.
$$
Thus,
\begin{equation}
\label{Sub-h}
\sum_{j=k}^{\infty} p_{i,j} j^h\le i^h \left(R(h)^{1/h} + c i^{\widetilde{h}/h-1}\right)^h.
\end{equation}
The right-hand side of~(\ref{Sub-h}) has the form
$$i^h R(h) + O\left(i^{h-(h-\widetilde{h})/h}\right),\ i\to\infty.
$$
Let $f(i)=(1-R(h)/\rho-\varepsilon) i^h$, $i>0$. It is positive when $\varepsilon$ is small enough, since $R(h)<\rho$. Then 
$$
i^h \left(R(h)^{1/h} + c i^{\widetilde{h}/h-1}\right)^h\le \rho (g(i)-f(i)) = i^h R(h) + \varepsilon \rho i^h 
$$
for all sufficiently large $i$. Taking $k$ large enough, we obtain the first statement of~(\ref{SenetaCond}). The second follows from~(\ref{Sub-h}).

This finishes the proof of Theorem~\ref{YnSub}.

Finally, Remark~\ref{TrajectorySub} is a corollary of Remark 1 of~\cite{Quasi}. Let us prove Remark~\ref{AllTrajectory}. Let $Y_0=i_0$ a.s. Note that for any $\{i_j, 1\le j\le n\}$ we have
\begin{eqnarray*}
{\bf P}(Y_j=i_j, j\le n|T>n)=\frac{1}{{\bf P}(T>n)} \prod_{j=1}^{n} p_{i_{j-1}, i_j} = 
\frac{f_{i_0} \rho^n}{f_{i_n} {\bf P}(T>n|Y_0=i_0)} 
 \prod_{j=1}^{n} \frac{p_{i_{j-1}, i_j} f_{i_j}}{\rho f_{i_{j-1}}}.
\end{eqnarray*}
Thus, for any $A\in \mathcal{B}(\mathbb{Z}^n)$ we have
$$
{\bf P}(\{Y_j,\ j\le n\}\in A|T>n)= \frac{f_{i_0} \rho^n}{{\bf P}(T>n|Y_0=i_0)}  {\bf E}\frac{1}{f_{Y_n^*}} I_{\{Y_j^*,\ j\le n\}\in A},
$$
Therefore, Remark~\ref{AllTrajectory} is a corollary of Theorem~\ref{YnSub}. The chain $\{Y_n^*\}$ is a positively recurrent due to~\cite{Quasi}. 
\end{proof}
\subsection{Proof of Lemma~\ref{RhoRepresentation}}
\begin{proof}
Note that ${\bf E}u^T<+\infty$, $u>1$, iff 
$$
\sum_{n=1}^{\infty} u^n {\bf P}(T>n)<+\infty.
$$
Thus,~(\ref{RAsConvRadius}) is equivalent to the first statement of the lemma. 

Fix some natural $i$ and positive $h$. Let us prove that $\rho\le \rho^*(h)\le \max(R(h), \rho)$, $h\in [0,\widehat{h})$, $\rho\ge R(h)$, $h>\beta$. 

Note that if the series~(\ref{ExpPowerSeries}) converges for some $s\rho^+<1$, then 
$$
\sum_{n=1}^{\infty} s^n {\bf P}(Y_n>0|Y_0=i)<+\infty,
$$
since ${\bf P}(Y_n>0|Y_0=i)\le {\bf E}(Y_n^h|Y_0=i)$. Thus, $\rho\le \rho^*(h)$.

On the other hand, suppose that $\rho<\rho^*$. For any positive $\delta$ we have
\begin{eqnarray*}
\sum_{n=1}^{\infty} \exp(\delta h n) (\rho^*)^{-n}{\bf E}(Y_n^h|Y_0=i)=+\infty.
\end{eqnarray*}
Fix some $k$ and note that 
$$
{\bf E}(Y_n^h|Y_0=i)\le \sum_{m=0}^{n}\sum_{j=1}^{k} {\bf E}\left(Y_{n-m}^h I_{n-m}|Y_0=j\right) {\bf P}(Y_{m}=j|Y_0=i) .
$$
By the definition of $\rho$, for some $c$, all natural $m$, and $1\le j\le k$ we have
$$
{\bf P}(Y_{m}=j|Y_0=i)\le c \rho^{m} e^{\delta h m}.
$$
By~(\ref{anIneq}) for $h>1$, some $c_1$, and any $\delta_1\in (0,h-\widetilde{h}(h))$ we have
$$
{\bf E}\left(Y_{n-m}^h I_{n-m}|Y_0=j\right) \le R(h)^{n-m} \left(1+\frac{c_1}{k^{\delta_1/h}}\right)^{(n-m) h}.
$$
Therefore, for some constant $C$ we have
\begin{eqnarray*}
{\bf E}(Y_n^h|Y_0=i) \le C n k \max\left(\rho e^{\delta h}, R(h) \left(1+\frac{c_1}{k^{\delta_1/h}}\right)^h\right)^n.
\end{eqnarray*}
The case $h\in (0,1]$  is considered similarly.
Taking $k$ large enough and $\delta$ small enough, we get $\rho\le \rho^*(h)\le \max(\rho, R(h))$. 

If $h^*_{++}$ is finite, then $\beta$ is finite too. Let us prove that if $h>\beta$, then $\rho^*(h)\ge R(h)$ and $\rho^*=\max(\rho, R(h))=R(h)$ for $h>h^*_{++}$. Fix some $c$ and note that 
$$
{\bf E}\left(\left.Y_n^h\right|Y_0=i\right)\ge R(h)^n {\bf E}^{(h)}\left(\left.\left(\frac{Y_{n}}{e^{S_n}}\right)^h I_{J_0}\right|Y_0=i\right),\quad J_0:= \left\{S_j\ge m(h) j/2,\ \kappa_j\le c j\right\}.
$$
Let $\delta\in (0, (1- \widetilde{h}(h)/h)/2)$. For any natural $j$, $l$ we have
\begin{equation}
\label{BjIneqLemma2}
{\bf P}\left(\left. |B_j|>l^{1-\delta} e^{\xi_j} \right|Y_{j-1}=l\right)\le \frac{\kappa_j l^{\widetilde{h}(h)}}{l^{(1-\delta)h}}\le \frac{\kappa_j}{l^{\delta h}}. 
\end{equation}
Let 
$$
d_n = d_{n-1}- d_{n-1}^{1-\delta} e^{-\delta S_{n-1}},\ n\ge 1,\ d_0=i.
$$

If $i$ is large enough that 
$$
 i \left(1- i^{-\delta}\right) \prod_{j=1}^{\infty} \left(1-e^{-\delta m(h) (j-1)/2}\right)\ge 2,
$$
then on $J_0$ we have $d_n\ge 1$. We prove it by induction on $n$. First, $d_1 = i (1-i^{-\delta})\ge 1$. If $d_j\ge 1$, $j\le n-1$, then
$$
d_n \ge d_{n-1} \left(1 - e^{-\delta m(h) (n-1)}\right) \ge i \prod_{j=1}^{n} \left(1-e^{-\delta m(h) (j-1)/2}\right)\ge 1.
$$

Let
$$
J_{j,1} = \left\{|B_j|\le Y_j^{1-\delta} e^{\xi_j}\right\},\ 
J_{j,2} = \left\{Y_j\ge d_j e^{S_j}\right\},\
j\ge 1.
$$ 
Note that 
$$J_{n,2}\cap J_0\subseteq \left\{Y_n\ge e^{S_n}\right\}.$$
Then 
\begin{eqnarray*}
{\bf P}_{\boldsymbol\eta}\left(J_{j,1}, J_{j,2},\ j\le n\right)\ge 
{\bf P}_{\boldsymbol\eta}\left(J_{j,1}, J_{j,2},\ j\le n-1\right) 
\min_{l\ge d_{n-1} e^{S_{n-1}}} {\bf P}_{\boldsymbol\eta}\left(\left.J_{n,1}\right|Y_{n-1}=l\right).
\end{eqnarray*}
Here we used the fact that if $J_{n,1}$ and $J_{n-1,2}$ are true, then
$$
Y_n \ge Y_{n-1} e^{\xi_n} - Y_{n-1}^{1-\delta} e^{\xi_n} = e^{\xi_n} (Y_{n-1}-Y_{n-1}^{1-\delta})\ge e^{S_n}\left(\frac{Y_{n-1}}{e^{S_{n-1}}} - e^{-\delta S_{n-1}} \frac{Y_{n-1}}{e^{S_{n-1}}}\right) \ge e^{S_n} d_{n}.
$$
Thus, by~(\ref{BjIneqLemma2}) on $J_0$ we have
$$
{\bf P}_{\boldsymbol\eta}\left(J_{j,1}, J_{j,2},\ j\le n\right)\ge \prod_{j=1}^{n} \left(1-\frac{\kappa_j}{d_j^{\delta h} e^{\delta h S_j}}\right)\ge  \prod_{j=1}^{\infty} \left(1-\frac{c j}{e^{\delta h m(h) j/2}}\right)>\delta_1,
$$
where $\delta_1$ is some positive number. Therefore,
$$
{\bf E}\left(\left.Y_n^h\right|Y_0=i\right)\ge R(h)^n {\bf E}^{(h)}\left({\bf P}_{\boldsymbol\eta}\left(J_{j,1}, J_{j,2},\ j\le n\right) I_{J_0}\right)\ge R(h)^n \delta_1 {\bf P}^{(h)}(J_0).
$$
But
$$
{\bf P}^{(h)}(J_0)\ge {\bf P}^{(h)}\left(S_j - \frac{m(h) j}{2} \ge 0, j\ge 0\right) - {\bf P}\left(\max_j \frac{\kappa_j}{j}>c\right).
$$
The first probability is positive, since $S_j-m(h)j/2$ is a random walk with positive mean. Since ${\bf E}\kappa_1<+\infty$ and $\kappa_j$ are i.i.d., we have $\kappa_j=O(j)$, $j\to \infty$ (by Lemma 3 of Paragraph 3 of Chapter 4 of \cite{Shiryaev} and Borel-Cantelli Lemma). Taking $c$ large enough, we get 
$$
{\bf P}^{(h)}(J_0) \ge \frac12{\bf P}^{(h)}\left(S_j - \frac{m(h) j}{2} \ge 0, j\ge 0\right) > 0.
$$
Therefore, $\rho^*(h)\ge R(h)$, $h>\beta$. This finishes the proof.
\end{proof}
\section{MRSRE with no absorption}
\label{NoAbsorption}
Assume that $\{Y_n\}$ is an irreducible aperiodic MRSRE (including $\{0\}$). Then we can state the following theorem.
\begin{thm}
\label{NoAbsorptionSubTh}
Let $\{Y_n\}$ be a subcritical irreducible aperiodic MRSRE. Assume that~(\ref{BnCond}) holds for some $h>0$, ${\bf E}^{(h)}\kappa_1<+\infty$, and  ${\bf E}\left(\left.Y_1^h\right|Y_0=0\right)<+\infty$. Then $\{Y_n\}$ is geometric ergodic, so
$$
{\bf P}(Y_n=k)\to \pi_k,\ n\to\infty,\ k\ge 1,
$$
where $\{\pi_k\}$ is the unique stationary distribution and the convergence is exponentially fast.
\end{thm}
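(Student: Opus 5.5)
We will prove geometric ergodicity through the Foster--Lyapunov drift criterion with a power test function. Since $\{Y_n\}$ is an irreducible aperiodic chain on a countable state space, it suffices to find $V\ge 1$, a finite set $K$, $\lambda<1$ and $b<\infty$ with
$$
{\bf E}(V(Y_1)|Y_0=i)\le \lambda V(i)+b I_{i\in K},\quad i\ge 0 .
$$
Indeed, on a countable space any finite set is petite, so the standard Meyn--Tweedie theorem then gives geometric ergodicity. Geometric ergodicity in turn yields a unique stationary distribution $\{\pi_k\}$ and exponentially fast convergence ${\bf P}(Y_n=k)\to\pi_k$.

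We take $V(i)=1+i^{h_0}$ for a small $h_0\in(0,h]$. First choose $h_0$ with $R(h_0)<1$. This is possible because $R(0)=1$ and $R'(0)={\bf E}\xi=\mu<0$, the process being subcritical. The condition~(\ref{BnCond}) persists for $h_0\le h$ with $\widetilde{h}(h_0)=h_0\widetilde{h}(h)/h<h_0$. We must also check that ${\bf E}\kappa_1e^{h_0\xi_1}<\infty$ follows from ${\bf E}^{(h)}\kappa_1<\infty$. If necessary we simply use $h_0=h$ when $R(h)<1$. Otherwise we interpolate with Hölder's inequality, or run the argument below with the exponent $h$ and $h-\widetilde h$ fixed, exactly as in part iii) of the proof of Theorem~\ref{YnSub}.

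The key computation copies~(\ref{Sub-h}). For $i\ge1$, if $h_0\le 1$ then subadditivity of $x\mapsto x^{h_0}$ and~(\ref{BnCond}) give
$$
{\bf E}(Y_1^{h_0}|Y_0=i)\le i^{h_0}R(h_0)+{\bf E}(\kappa_1e^{h_0\xi_1})\, i^{\widetilde{h}(h_0)} .
$$
If $h_0>1$ we use the Minkowski form as in~(\ref{Sub-h}) instead. Since $\widetilde{h}(h_0)<h_0$, the right-hand side is $i^{h_0}R(h_0)+o(i^{h_0})$. Fix $\lambda\in(R(h_0),1)$. Then the drift inequality holds for all $i>k$ once $k$ is large. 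On the finite set $K=\{0,1,\dots,k\}$ the left-hand side is finite. For $i\ge1$ this follows from the displayed bound, and for $i=0$ it is exactly the assumption ${\bf E}(Y_1^h|Y_0=0)<\infty$, which gives the bound for $h_0\le h$ by Lyapunov's inequality.

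The main obstacle is the careful matching of moment conditions. We must make sure that the integrability ${\bf E}^{(h)}\kappa_1<\infty$ actually controls ${\bf E}\kappa_1 e^{h_0\xi_1}$ for the exponent $h_0$ at which $R(h_0)<1$. The paper's convention that~(\ref{BnCond}) holds on an interval $(0,\widehat h]$ suggests that one may take $h_0=h$ and assume $R(h)<1$ implicitly. Failing that, we would use the Lyapunov reduction of~(\ref{BnCond}) together with the conditional Jensen inequality, replacing $\kappa$ by $\kappa^{h_0/h}$, so that ${\bf E}\kappa^{h_0/h}e^{h_0\xi}\le({\bf E}\kappa e^{h\xi})^{h_0/h}<\infty$. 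The remaining steps are routine.
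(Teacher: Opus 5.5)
Your proof is correct, and it is close to the paper's. Both rest on the same one-step estimate ${\bf E}(Y_1^{h}|Y_0=i)\le R(h)i^{h}+c\,i^{\widetilde h}$ with $R(h)<1$, and both conclude via Theorem 15.0.1 of Meyn and Tweedie. You enter that theorem through the drift condition; the paper enters it through exponential moments of return times to a small set.

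The paper iterates the estimate along paths that stay above $k$. This gives $a_n={\bf E}(Y_n^hI_n|Y_0=i)\le c\,i^h(R(h)+\delta)^n$. Markov's inequality then yields exponential tails for the return time to $\{0,1,\dots,k\}$. The start at $0$ is handled separately by conditioning on $Y_1$ and using ${\bf E}(Y_1^h|Y_0=0)<\infty$.

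Your inequality for $V(i)=1+i^{h_0}$ is exactly the one-step version of that recursion. In your version the state $0$ is absorbed into the term $b I_{i\in K}$, so no separate first-step argument is needed. The argument is slightly shorter, and it gives convergence in the $V$-norm directly.

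On the moment matching, keep your Jensen reduction. Replace $\kappa$ by $\kappa^{h_0/h}$, so that ${\bf E}\kappa^{h_0/h}e^{h_0\xi}\le({\bf E}\kappa e^{h\xi})^{h_0/h}=\bigl(R(h){\bf E}^{(h)}\kappa_1\bigr)^{h_0/h}<\infty$. Combine this with $R(h_0)<1$ for small $h_0$, which holds because $R(0)=1$ and $R'(0+)={\bf E}\xi<0$. This is what justifies the paper's unexplained step ``assume that $h$ satisfies $R(h)<1$''.

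Drop your fallback of running the argument at the original exponent $h$ as in part iii) of the proof of Theorem~\ref{YnSub}. It fails whenever $R(h)\ge 1$, because the drift at exponent $h$ is then not contractive.
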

\begin{proof}[Proof of Theorem~\ref{NoAbsorptionSubTh}]
Let $T_0$ be the time until the first return to $\{0,1,2,\dotsc, k\}$ from $i\in \mathbb{N}$. For $h\in (0,1)$
$$
{\bf P}(T_0>n|Y_0=i)={\bf P}(Y_j>k, j\le n|Y_0=i)\le k^{-h} {\bf E}(Y_n^h I_n|Y_0=i)=:k^{-h} a_{n},
$$
and
$$
a_n\le a_{n-1} R(h) \left(1 + k^{-\delta} {\bf E}^{(h)} \kappa_n \right).
$$
Assume that $h$ satisfies $R(h)<1$. Then the parameter $k$ can be chosen large enough so that 
$$
a_n\le c i^h  (R(h)+\delta)^n
$$
for some $\delta<1-R(h)$. Thus, for any $i\in \{1,2,\dotsc,k\}$ the probability ${\bf P}(T_0>n|Y_0=i)$ is exponentially small. If $i=0$, then
$$
{\bf P}(T_0>n|Y_0=0)\le \sum_{i=k+1}^{\infty} {\bf P}(T_0>n-1|Y_0=i) {\bf P}(Y_1=i|Y_0=0)\le c {\bf E}(Y_1^h|Y_0=0) (R(h)+\delta)^n.
$$
Therefore, ${\bf P}(T_0>n|Y_0=i)$ is exponentially small for all $i\le k$. Thus,
$$
\sup_{0\le i\le k} {\bf E}\left(\left.e^{\delta_1 T_0}\right|Y_0=i\right)<+\infty
$$
for some $\delta_1>0$. By Theorem 15.0.1 of \cite{GeometricErg} the chain is geometrically ergodic (it is obvious that $\{0,1,\dotsc, k\}$ is a small set).
\end{proof}
\section{Applications}
\label{Applications}
We want to apply the general theory to particular models of branching processes. The key question is how to find or estimate $\rho$. We can  use one of following approaches:
\begin{enumerate}
\item to describe $\rho$ using the definition~(\ref{rho});
\item to find $\rho$, using~(\ref{RAsConvRadius});
\item to study one of the equations $\pi P_{+}=\rho \pi$ or $P^+ f = \rho f$ (and use~(\ref{IneqCriteriaRight}) or (\ref{IneqCriteriaLeft}));
\item to use equation~(\ref{ExpPowerSeries});
\item in the case of models in a random environment we can use the quenched approach: to study inhomogeneous Markov chain $\{Y_n\}$ for a fixed environment;
\item numerically we can also use the approach, described below lemma~\ref{RhoRepresentation}.
\end{enumerate}
\subsection{Branching processes and branching processes in a random environment}
Let $\boldsymbol\eta$ be the environment. For fixed $\boldsymbol\eta$ let $X_{i,j}\sim P_{\eta_i}$, $i,j\ge 1$, be independent r.v., where $\{P_y,\ y\in\mathbb{R}\}$ is some distribution family. The branching process in a random environment (BPRE) is defined as follows:
$$
Z_0=1,\quad Z_{n+1} = \sum_{i=1}^{Z_n} X_{n+1,i},\ n\ge 1.
$$
Let $\xi_i = \ln {\bf E}_{\boldsymbol\eta} X_{i,1}$, $i\in \mathbb{N}$. Note that we assume that ${\bf P}_{\boldsymbol\eta} (X_{1,1}=0)<1$ for a.s. $\boldsymbol\eta$.

This first work about BPRE were~\cite{Smith1969},~\cite{Athreya-1}, and~\cite{Athreya-2}. For a brief review see \cite{VatKerst}. 

Then BPRE $\{Z_n\}$ is a MRSRE with 
$$
A_{n} = e^{\xi_n},\quad B_n = Z_{n} - A_n Z_{n-1}=\sum_{i=1}^{Z_{n-1}} \left(X_{n,i}-{\bf E}_{\boldsymbol\eta} X_{n,1}\right).
$$
If ${\bf E}X^h$ is finite for some $h>1$, then~(\ref{BnCond}) is a corollary of Theorem~\ref{BahrTh}.
$$
{\bf E}_{\boldsymbol\eta} \left(\left.\left|\sum_{i=1}^{Z_{n-1}} \left(X_{n,i}-{\bf E}_{\boldsymbol\eta} X_{n,1}\right)\right|^{h}\right|Z_{n-1}\right)\le c {\bf E}|X_{n,i}-{\bf E}_{\boldsymbol\eta} X_{n,1}|^h Z_{n-1}^{\widetilde{h}},
$$
where $c$ is some constant and $\widetilde{h}(h)=\min(h/2,1)$.

So, we can easily apply Theorem~\ref{YnSub} to this model. Let describe the parameter $\rho$ and the limit distribution $\{p_i^*\}$. This problem is well-known (see \cite{VatKerst}, Theorem 9.1).
However, we use this classic example to illustrate the approaches i-v. We just sketch the main ideas of this approaches.

i) For ordinary BP this method was applied in~\cite{Seneta-Vere-Jones}, Section 5. For BPRE the same approach can be used. Note that
\begin{eqnarray*}
\sqrt[n]{{\bf P}(Z_n=1|Z_0=1)} = \sqrt[n]{{\bf E}\prod_{i=1}^{n} \phi_{\eta_i}'(\phi_{\eta_{i+1}}(\dotsc \phi_{\eta_n}(0)\dotsc))} = \\
R(1) \sqrt[n]{{\bf E}^{(1)}\prod_{i=1}^{n} \frac{\phi_{\eta_i}'(\phi_{\eta_{i+1}}(\dotsc \phi_{\eta_n}(0)\dotsc))}{\phi_{\eta_i}'(1)}}\le R(1). 
\end{eqnarray*}
Thus, $\rho\le R(1)$. On the other hand, if $R'(1)<0$ and ${\bf E}X_1 \ln X_1<+\infty$, then $\rho=R(1)$ by (9.6) of \cite{VatKerst}. 

ii) Since 
$${\bf P}(T>n) = {\bf E}\left(1-\phi_{\eta_1}(\phi_{\eta_2}(\dotsc \phi_{\eta_n}(0))\right)=
\prod_{i=1}^{n} \phi'_{\eta_i}(s_{i+1}),\ s_{i+1}\in [\phi_{\eta_{i+1}}(\phi_{\eta_{i+2}}(\dotsc \phi_{\eta_n}(0)  \dotsc)),1].
$$
So, $\rho = R(1)$ in the case $R'(1)<0$ by the same arguing as above.

iii) The equation $\pi P^+ = \rho \pi$ is equivalent to 
$$
\rho \psi(s) = \sum_{j=1}^{\infty} \sum_{i=1}^{\infty} \pi_i p_{i,j} s^j = \sum_{i=1}^{\infty} \pi_i {\bf E}\left(\varphi_1^i(s) - \varphi_1^i(0)\right) = {\bf E}\psi(\varphi_{\eta_1}(s))-{\bf E}\psi(\varphi_{\eta_1}(0)),
$$
where 
$\psi(s)$ is a p.g.f. of $\pi$. Let
$$
(Af)(s) = {\bf E}f(\varphi_{\eta_1}(s)) - {\bf E}f(\varphi_{\eta_1}(0))
$$
be a linear operator on $C[0,1]$. Then $\rho$ is an eigenvalue and $\psi$ is an eigenfunction of $A$. The space of non-negative functions is a positive cone in $C[0,1]$, thus, it's natural to use analogues of Perron-Frobenius for Banach spaces (see
\cite{Marek}) to find $f$ and $\rho$. It looks like $\rho$ is a Perron eigenvalue and $f$ is the corresponding function of $A$, but we fail to prove it.

iv) Suppose that $R'(\beta)=0$ for some $\beta$. We know that
$${\bf E}Z_n = R(1)^{n},
$$
thus $\rho^*(1)=R(1)$. If $R'(1)<0$, then $\rho^*(1+\delta)>R(1+\delta)$ for some positive $\delta$ and we can apply Lemma~\ref{RhoRepresentation} to say that $\rho=\rho^*(h)$ for all $h\le 1+\delta$. So, $\rho=R(1)$. 

v) If we fix the environment, we can use the quenched approach. If ${\bf P}(\eta_n)$ is the conditional transition matrix of BPRE in the moment $n$, then $f(i) = i$ satisfies
$$
\sum_{j=1}^{\infty} p_{i,j}(\eta_n) f(j) = e^{\xi_n} f(j). 
$$ 
Thus,
$$
q_{i,j}(\eta_n):=\frac{p_{i,j}(\eta_n) j}{e^{\xi_n} i}
$$
is the new family of transition matrices. Let $\{U_n\}$ be the corresponding non-homogeneous Markov chain. Then 
$$
{\bf P}(Z_n=j|Z_0=i) = \frac{i}{j} {\bf E}e^{\xi_1+\xi_2+\dotsb+\xi_n} {\bf P}_{\boldsymbol\eta}(U_n=j|U_0=i) = \frac{i}{j} R(1)^n {\bf P}(V_n=j|V_0=i),
$$
where $\{V_n\}$ is new Markov chain on $\mathbb{N}$ with
$$
{\bf P}(V_1=j|V_0=i) = {\bf E}^{(1)} {\bf P}_{\boldsymbol\eta}(U_1=j|U_0=i),\ i,j\in \mathbb{N}.
$$
One can see that $\{U_n\}$ is a branching process in a varying environment, where all the particles except one in the $i$-th generation give a random number of descendants $X_{i,\cdot}\sim \varphi_{\eta_i}$, and this particle gives a random number of descendants $\widetilde{X}_i$ with the p.g.f. $\phi_{\eta_i}'(s)/\phi_{\eta_i}'(1)$. This is similar to the Geiger tree construction, see Section 1.4 of \cite{VatKerst}. If one show that $\{V_n\}$ is ergodic as $R'(1)<0$, then $\rho=R(1)$. 
By Theorem~\ref{BahrTh} for any positive $h$ 
\begin{eqnarray*}
{\bf E}_{\boldsymbol\eta}\left(\left.\left|V_n-e^{\xi_n} V_{n-1}\right|^h\right|V_{n-1}=j\right) =\left({\bf E}_{\boldsymbol\eta}\left|\widetilde{X}_{n}-e^{\xi_n}+\sum_{i=1}^{j-1} \left(X_{n,i} - e^{\xi_n}\right)\right|^{1+\delta}\right)^{h/(1+\delta)}\le\\ 
\left({\bf E}_{\boldsymbol\eta}\left|\widetilde{X}_{n}-e^{\xi_n}\right|^{1+\delta}+2(j-1){\bf E}_{\boldsymbol\eta}\left|X_{n,1} - e^{\xi_n}\right|^{1+\delta}\right)^{h/(1+\delta)}\le \kappa_n e^{h \xi_n} j^{h/(1+\delta)},
\end{eqnarray*}
where
$$
\kappa_n = e^{-h \xi_n} \max\left({\bf E}_{\boldsymbol\eta}\left|\widetilde{X}_{n}-e^{\xi_n}\right|^{1+\delta},2{\bf E}_{\boldsymbol\eta}\left|X_{n,1} - e^{\xi_n}\right|^{1+\delta}\right)^{h/(1+\delta)}.
$$
Since
$$
{\bf E}^{(1+h)} \kappa_1 = {\bf E}\left(e^{\xi_n}\left({\bf E}_{\boldsymbol\eta} \widetilde{X}_1^{1+\delta}\right)^{h/(1+\delta)}\right) +   
{\bf E}\left(e^{\xi_1}\left({\bf E}_{\boldsymbol\eta} \widetilde{X}_{1,1}^{1+\delta}\right)^{h/(1+\delta)}\right),
$$
if the expectations in the right-hand side are finite for some positive $h$, then by Theorem~\ref{NoAbsorptionSubTh} the sequence $\{V_n\}$ is ergodic and $\rho=R(1)$.

Thus, different ways lead us to the fact that if $R'(1)<0$, ${\bf E}X^{1+\delta}<+\infty$ for some $\delta$, then
$$
{\bf P}(Z_n>0|Z_0=j)\sim f_j R(1)^n,\quad {\bf P}(Z_n=i|Z_0=j)\sim p_i^* R(1)^n,\ n\to\infty.
$$
 This result is well known. Our conditions are more restrictive then the classical ones since we work only in geometrically ergodic case (in sense of quasi-stationary distribution).
\subsection{Branching processes with migration and branching process with migration in a random environment}
Let $\boldsymbol\eta$ be the environment. For fixed $\boldsymbol\eta$ let $X_{i,j}\sim {\bf P}_{\eta_i}$, $\zeta_i\sim {\bf \widetilde{P}}_{\eta_i}$, $i,j\ge 1$, be independent integer-valued r.v., where $\{{\bf P}_y, {\bf \widetilde{P}}_y,\ y\in\mathbb{R}\}$ is some distribution family. Let $\phi_y$, $\widetilde{\phi}_y$ be the corresponding p.g.f's of ${\bf P}_y$ and ${\bf \widetilde{P}}_y$. The branching process with migration in a random environment (BPMRE) is defined as follows:
$$
Z^*_0=1,\quad Z^*_{n+1} = \max\left(0, \sum_{i=1}^{Z^*_n} X_{n+1,i}+\zeta_{n+1}\right),\ n\ge 1.
$$
Let $\xi_i = \ln {\bf E}_{\boldsymbol\eta} X_{i,1}$, $i\in \mathbb{N}$. Note that we assume that ${\bf P}_{\boldsymbol\eta} (X_{1,1}=0)<1$ for a.s. $\boldsymbol\eta$.

Notice that $X_{i,j}$ are supposed to be non-negative, but we allow $\zeta_{\cdot}$ to be negative. This corresponds to emigration. The process with $\zeta_{1}\ge 0$ a.s. is called the branching process with immigration in a random environment (BPIRE), if $\zeta_{1}\le 0$ we call the process the branching process with emigration in a random environment (BPERE).

Similarly, we can can consider a branching process with migration stopped at zero in a random environment (BPMRE0) $Z_n^{0}$. The process is defined in the same way, but zero is set to be absorbing.

Both BPMRE and BPMRE $\{Z^*_n\}$ are MRSRE with 
$$
A_{n} = e^{\xi_n},\quad B_n = \max\left(\sum_{i=1}^{Z^*_{n-1}} \left(X_{n,i}-{\bf E}_{\boldsymbol\eta} X_{n,1}\right)+\zeta_{n+1}, -A_n Z^*_{n-1}\right).
$$

If ${\bf E}X^h$, ${\bf E}|\zeta|^h$ are finite for some $h>1$, then~(\ref{BnCond}) is a corollary of Minkowski inequality 
\begin{eqnarray*}
{\bf E}_{\boldsymbol\eta}\left(\left.|B_n|^h\right|Z^*_{n-1}\right)\le \left(\left({\bf E}_{\boldsymbol\eta}\left(\left.\left|\sum_{i=1}^{Z^*_{n-1}} \left(X_{n,i}-{\bf E}_{\boldsymbol\eta} X_{n,1}\right)\right|^h\right|Z^*_{n-1}\right)\right)^{1/h} + \left({\bf E}_{\boldsymbol\eta}|\zeta_n|^h\right)^{1/h}\right)^h\le \\
\left(c \left({\bf E}_{\boldsymbol\eta}|X_{n,i}-{\bf E}_{\boldsymbol\eta} X_{n,1}|^{h}\right)^{1/h} + \left({\bf E}_{\boldsymbol\eta}|\zeta_n|^h\right)^{1/h}\right)^h \left(Z_{n-1}^*\right)^{\max(h/2,1)}.
\end{eqnarray*}
So, $\widetilde{h}(h)=\max(h/2,1)$. 

Note that for $0<h\le 1$ we can instead assume that for some $\delta\in (0,1)$
\begin{equation}
\label{h<1}
{\bf E}\left({\bf E}_{\boldsymbol\eta}X_{n,1}^{1+\delta}\right)^{h/(1+\delta)}<+\infty,\quad {\bf E}|\zeta_1|^h<+\infty.
\end{equation}
Really, 
\begin{eqnarray*}
{\bf E}_{\boldsymbol\eta}\left(\left.|B_n|^h\right|Z^*_{n-1}\right)\le {\bf E}_{\boldsymbol\eta}\left(\left.\left|\sum_{i=1}^{Z^*_{n-1}} \left(X_{n,i}-{\bf E}_{\boldsymbol\eta} X_{n,1}\right)\right|^{1+\delta}\right|Z^*_{n-1}\right)^{h/(1+\delta)} + {\bf E}_{\boldsymbol\eta}|\zeta_n|^h\le \\
\left(2 \left({\bf E}_{\boldsymbol\eta}|X_{n,i}-{\bf E}_{\boldsymbol\eta} X_{n,1}|^{1+\delta}\right)^{h/(1+\delta)} + {\bf E}_{\boldsymbol\eta}|\zeta_n|^h\right) \left(Z_{n-1}^*\right)^{h/(1+\delta)}
.
\end{eqnarray*}
So,~(\ref{BnCond}) holds for some $\widetilde{h}(h)=h/(1+\delta)$. The same facts are true for BPMRE0, since~(\ref{BnCond}) is stated only for non-absorbing states. 

Consider the subcritical BPMRE (${\bf E}\xi<0$) and assume that the chain $\{Z_n^*\}$ is irreducible. Suppose that condition (\ref{h<1}) holds for some $t>1$ and ${\bf E}|\zeta|^h$ is finite for some $h>0$. Then the process satisfies the conditions of Theorem~\ref{NoAbsorptionSubTh}. Thus, the process $\{Z_n^*\}$ is geometrically ergodic. In recent work~\cite{Kevei} the ergodic distribution of BPIRE was studied more carefully under weaker conditions.

Consider an irreducible subcritical BPMRE0 $\{Z_n^*\}$. Assume that $\rho>\rho^+$ and ${\bf E}X^h<\infty$, ${\bf E}|\zeta|^{h}$ for some $h>\max(1,h^*_{+})$ or conditions~(\ref{h<1}) hold for some $1>h>h^*_{+}$ and some $t>1$.  Then for any $i>0$ we have
$$
{\bf P}(Z_n^*>0|Z_0=i)\sim f_i \rho^n,\quad 
{\bf P}(Z_n^*=i|Z_n^*>0)\to p_i^*,\ n\to\infty.
$$
It is hard to find the parameter $\rho$ in this case. First, see that for BPIRE0 we have $\rho\ge R(1)$, for BPERE0 we have $\rho\le R(1)$, since the corresponding processes are stochastically greater (less) then BPRE. 

In~\cite{SubcritIm} similar results for BPRIRE0 were obtained under weaker conditions. The parameter $\rho$ was represented as the smallest positive $\rho$ such that 
$$
\sum_{n=1}^{\infty} \rho^{-n} {\bf P}(T>n)=+\infty.
$$
This corresponds to the approach ii): $\rho$ can be found as the smallest solution of ${\bf E}\rho^{-T}=+\infty$. 

Notice, that the classification in~\cite{SubcritIm} is the same as for BPRE and is based on $R'(1)$. It looks like this classification is not natural for BPIRE -- if $R'(h_+^*)<0<R'(1)$, then the process is called ''weakly subcritical'', but the asymptotics is the same as for ''strongly subcritical'' BPIRE (this case corresponds to the first part of Theorem 1 in~\cite{SubcritIm}). We suppose that the case ii of Theorem 1 of~\cite{SubcritIm} corresponds to the intermediate subcritical behavior and the case iii -- to the weakly subcritical behavior.  

It looks like the approaches iv) and v) are hard to apply to this model, but the approach iii) gives a new look. Similarly as for BPRE if
$$
(Af)(s) = {\bf E}g(\varphi_{\eta_1}(s)) \widetilde{\varphi}_{\eta_1}(s) - {\bf E}g(\varphi_{\eta_1}(0)) \widetilde{\varphi}_{\eta_1}(0),\ f\in C[0,1],
$$
then $\rho$ is a positive eigenvalue and p.g.f. of $p_i^*$ is a positive eigenfunction of the operator $A$. 
\subsection{Bisexual Branching Processes and Bisexual Branching Processes in a Random Environment}
Let $((W_{n,i},F_{n,i}), i\ge 1)$ be independent sequences of i.i.d. random vectors with some distribution $\mathcal{P}$ on $\mathbb{N}_0\times \mathbb{N}_0$. Assume that $L:\mathbb{N}_0\times \mathbb{N}_0\to \mathbb{N}_0$ is some function, called mating. Then bisexual branching process (BBP) is defined by the relation
$$
N_{n+1} = L\left(\sum_{i=1}^{N_n} W_{n+1,i}, \sum_{i=1}^{N_n} M_{n+1,i}\right),\ n\ge 1,\ N_0=1.
$$
This model has a natural biological interpretation. Assume that $\{N_n\}$ is a number of families. Then $i$-th family of $n$-th generation produces a random number $M_{n,i}$ of male descendants and a random number $F_{n,i}$ of female descendants. After that the total number of $x$ males and $y$ females forms $L(x,y)$ families. 

The BBP in a random environment (BBPRE) is defined similarly. We fix a random sequence $\boldsymbol\eta$ of i.i.d. r.v. Assume that for any $n$, $i$ the random vector $(W_{n,i},F_{n,i})$ has conditional distribution $\mathcal{P}_{\eta_n}$ and $(W_{n,i},F_{n,i})$ are conditionally independent for given $\boldsymbol\eta$, $i,n\in\mathbb{N}$. We define the process by
$$
N_{n+1} = L\left(\sum_{i=1}^{N_n} W_{n+1,i}, \sum_{i=1}^{N_n} M_{n+1,i}, \eta_{n+1}\right),\ n\ge 1,\ N_0=1,
$$
where for every $z\in \mathbb{R}$ $L(x,y,z)$ is a mating function.

We always assume that the mating function $L(x,y,z)$ satisfy the key appoximation condition: there exists function $g:\mathbb{R}_0\times \mathbb{R}_0\to \mathbb{R}_0$, function $c_1:\mathbb{R}\to \mathbb{R}^+$ and positive constant $\delta$ such that
\begin{equation}
\label{ApproxLipBBP}
|L(x,y,z) - g(x,y,z)|\le c_1(z) (x+y)^{1-\delta}.
\end{equation}
Moreover, we assume that $g(x,y,z)$ is Lipschitz in the first two arguments with the constant $c_2(z)$ and $g(cx, cy, z) = c g(x,y,z)$, $x,y,c\in\mathbb{R}_0$. 

Then (see \cite{ShkBBPRE} for details) $\{N_n\}$ can be represented as MRSRE:
$$
N_{n+1} = A_{n+1} N_n + B_{n+1},\ n\ge 0,
$$
where
\begin{eqnarray*}
A_{n}=g\left({\bf E}_{\boldsymbol\eta} F_{n,1}, {\bf E}_{\boldsymbol\eta} M_{n,1}
\right),\ n\ge 1.
\end{eqnarray*}
The process is subcritical if ${\bf E}\ln A_n<0$. For $h \ge (1-\delta)^{-1}$ we have
\begin{eqnarray*}
{\bf E}_{\boldsymbol\eta}\left(\left.|B_{n+1}|^h\right|N_n=k\right)^{1/h} \le\\
\left({\bf E}_{\boldsymbol\eta}\left|L\left(\sum_{i=1}^{k} F_{n+1,i},  \sum_{i=1}^{k} M_{n+1,i}, \eta_{n+1}\right) - 
g\left(\sum_{i=1}^{k} F_{n+1,i},  \sum_{i=1}^{k} M_{n+1,i}, \eta_{n+1}\right) \right|^h\right)^{1/h} + \\
\left({\bf E}\left|g\left(\sum_{i=1}^{k} F_{n+1,i},  \sum_{i=1}^{k} M_{n+1,i}, \eta_{n+1}\right)  - k g\left({\bf E}_{\boldsymbol\eta} F_{n+1,i},  {\bf E}_{\boldsymbol\eta} M_{n+1,i}, \eta_{n+1}\right) \right|^h\right)^{1/h}. 
\end{eqnarray*}
Due to~(\ref{ApproxLipBBP}) the first term is bounded from above by
\begin{eqnarray*}
c_1(\eta_{n+1}) \left({\bf E}_{\boldsymbol\eta}\left(\sum_{i=1}^{k} (F_{n+1,i}+ M_{n+1,i})\right)^{(1-\delta) h}\right)^{1/h}\le \\
c_1(\eta_{n+1})  k^{1-\delta} \left({\bf E}_{\boldsymbol\eta} \left(F_{n+1,1}+M_{n+1,1}\right)^{(1-\delta) h}\right)^{1/h}.
\end{eqnarray*}
Since $g$ is Lipschitz, the second term is bounded from above by
\begin{eqnarray*}
c_2(\eta_{n+1})^{1/h}\left({\bf E}_{\boldsymbol\eta}\left|\sum_{i=1}^{k} \left(F_{n+1,i}+M_{n+1,i} -{\bf E}_{\boldsymbol\eta} (F_{n+1,i}+M_{n+1,i})\right)\right|^h \right)^{1/h} \le \\
c_2(\eta_{n+1})^{1/h} c(h) k^{\max(1/h, 1/2)} \left({\bf E}_{\boldsymbol\eta}\left|F_{n+1,1}+M_{n+1,1}{\bf E}_{\boldsymbol\eta} (F_{n+1,1}+M_{n+1,1})\right|^h\right)^{1/h}.
\end{eqnarray*}
Thus,~(\ref{BnCond}) holds with $\widetilde{h}=\max(h/2, (1-\delta)h)$ and
\begin{eqnarray*}
\kappa_{n+1} = e^{-\xi_{n+1} h}
\left(c_1(\eta_{n+1})\left({\bf E}_{\boldsymbol\eta} \left(F_{n+1,1}+M_{n+1,1}\right)^{(1-\delta) h}\right)^{1/h} + \right.\\
\left.
c_2(\eta_{n+1})^{1/h} c(h) \left({\bf E}_{\boldsymbol\eta}\left|F_{n+1,1}+M_{n+1,1}-{\bf E}_{\boldsymbol\eta} (F_{n+1,1}+M_{n+1,1})\right|^h\right)^{1/h}
\right)^h.
\end{eqnarray*}
The condition ${\bf E}^{(h)} \kappa < +\infty$ is equivalent to 
\begin{eqnarray}
\label{BBPRECond}
{\bf E}\left(c_1^h (\eta_{1}){\bf E}_{\boldsymbol\eta} \left(F_{1,1}+M_{1,1}\right)^{(1-\delta) h}\right)<+\infty,\\
{\bf E}\left(c_2(\eta_{1}) {\bf E}_{\boldsymbol\eta}\left|F_{1,1}+M_{1,1}-{\bf E}_{\boldsymbol\eta} (F_{1,1}+M_{1,1})\right|^h\right) < +\infty.
\nonumber
\end{eqnarray}
Let
$$
\rho:=\lim_{n\to\infty} \sqrt[n]{{\bf P}(N_n=j|N_0=i)},
$$
and let $h^*_{+}: R(h^*_+)=\rho$ and $R'(h^*_+)>0$. By Theorem~\ref{YnSub} if for some $h>h^*_{+}$~(\ref{BBPRECond}) holds, then 
$$
{\bf P}(N_n>0)\sim c \rho^n,\ {\bf P}(N_n=k|N_n>0)\to p_k^*,\ k\in \mathbb{N},\ n\to\infty,
$$
where $\{p_k^*\}$ is some distribution. Note, that in the case of BBP condition~(\ref{BBPRECond}) holds if 
\begin{equation}
\label{BBPCond}
{\bf E}F_{1,1}^h<+\infty,\ {\bf E}M_{1,1}^h<+\infty.
\end{equation}

It looks like it is to hard to express $\rho$ explicitly, since the transition matrix $P^+$ has a complicated structure. However, we can give some upper and lower bounds. 

First of all, let $N_n$ be BBP. Then the subcritical process is strongly subcritical. Then
\begin{thm}
Let $\{N_{n}\}$ be a subcritical BBP, satisfying the approximation condition~(\ref{ApproxLipBBP}) and~(\ref{BBPCond}) holds for $h=h^*_{+}$. Then Theorem~\ref{YnSub} holds with some $\rho=\rho^*(1)\in (0,\mu]$, where $\mu = e^{\xi_1}$.
\end{thm}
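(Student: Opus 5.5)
The plan is to reduce the statement to Theorem~\ref{YnSub}, applied to the BBP viewed as an MRSRE. In the BBP case there is no environment, so $\xi_1=\ln A_1=\ln g({\bf E}F_{1,1},{\bf E}M_{1,1})$ is a constant. Write $\mu=e^{\xi_1}<1$ by subcriticality. Then $R(h)=\mu^h$ is strictly decreasing and $\rho^+=\inf_{h\ge0}R(h)=0$.

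\textbf{Step 1: strong subcriticality and the location of $\rho$.}
\begin{itemize}
\item As remarked after Lemma~\ref{LemBetaRho}, $\rho^+=0<\rho$ automatically, so the process is strongly subcritical.
\item Irreducibility and aperiodicity on $D$, together with positive absorption probability, give $\rho\in(0,1)$ via~(\ref{rho}) and the cited lemma of \cite{Seneta-Vere-Jones}.
\item Since $R$ is strictly decreasing from $1$ to $0$, the equation $R(h^*_+)=\rho$ has a unique solution $h^*_+=\ln\rho/\ln\mu>0$, and $R'(h^*_+)<0$.
\item There is no root with $R'>0$, so $h^*_{++}=+\infty$. Lemma~\ref{RhoRepresentation}(ii) then gives $\rho^*(h)=\rho$ for every $h\in(0,\widehat h)$, in particular $\rho=\rho^*(1)$.
\end{itemize}

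\textbf{Step 2: the bound $\rho\le\mu$.} I would bound ${\bf E}(N_n\mid N_0=i)$ directly. The approximation condition and homogeneity of $g$ give
$$
{\bf E}(N_{n+1}\mid N_n=k)\le \mu k+ c\,k^{1-\delta}+(\text{Lipschitz fluctuation term}),
$$
where the fluctuation term is $O(k^{\max(1/h,1/2)})$. Iterating this as in the bound~(\ref{anIneq}) (case $h=1$) on the event of staying above a level $k_0$ bounds the growth of ${\bf E}N_n$ by $(\mu+\varepsilon)^n$ away from small states. Summing over the last visit below $k_0$, exactly as in the proof of Lemma~\ref{RhoRepresentation}, gives $\rho^*(1)\le\max(\rho,\mu)$.

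This alone does not exclude $\rho>\mu$, so I would argue directly. Suppose $\rho>\mu$. The excursions above $k_0$ decay like $(\mu+\varepsilon)^n<\rho^n$, which is exactly QS1, and on their own they cannot force $\rho>\mu$. What must then be ruled out is that $\rho$ is governed entirely by the finite block $P^+_{k_0}$. The intended mechanism is:
\begin{itemize}
\item Apply criterion~(\ref{IneqCriteriaRight}) with the test sequence $x_j=j$, which should give $\sum_j p_{i,j}\,j\le(\mu+\varepsilon)\,i$ for large $i$.
\item Handle the finitely many small $i$ by modifying $x$ there.
\end{itemize}
If this works it yields $R^{-1}=\rho\le\mu+\varepsilon$, and letting $\varepsilon\to0$ gives $\rho\le\mu$. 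I expect this step to be the main obstacle. The lower-order terms $c\,k^{1-\delta}$ are positive, so the inequality only holds for large $i$, and patching the finitely many small states requires enough slack; with $x_j=j$ there may be none. If $x_j=j$ fails, I would try $x_j=j+a$ or $x_j=j^{h}$ with $h$ close to $1$. Failing that, I would accept the conclusion $\rho\le\mu$ through the monotone comparison: mating can only lose families relative to $g$ up to lower-order terms, so the process is dominated by a BPGW of mean $\mu+\varepsilon$.

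\textbf{Step 3: verifying the hypotheses of Theorem~\ref{YnSub}.} Condition~(\ref{BnCond}) and ${\bf E}^{(h)}\kappa<\infty$ follow from~(\ref{BBPCond}) through the computation preceding the statement, which gives~(\ref{BBPRECond}) with deterministic $c_1,c_2$. The theorem needs these at some $h>h^*_+$, while the statement assumes~(\ref{BBPCond}) at $h=h^*_+$. I would therefore either read the hypothesis as holding for a slightly larger $h$, or use the remark that~(\ref{BnCond}) extends to $h_0\le h$ and choose $h$ by continuity. With these verified, Theorem~\ref{YnSub} applies and yields all its conclusions with $\rho=\rho^*(1)\in(0,\mu]$.
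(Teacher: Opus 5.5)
Your Step~1 matches the paper ($\rho^+=0$, hence strong subcriticality and $h^*_{++}=+\infty$, hence $\rho^*(h)=\rho$ for all $h$ by Lemma~\ref{RhoRepresentation}). Step~2, however, which carries the only real content of the statement, has a genuine gap.

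The paper does not derive the mean bound from the approximation condition. It imports the exact, non-asymptotic inequality ${\bf E}N_n\le\mu^n$ from Lemma~5 of~\cite{Zhiyanov}, i.e.\ in effect ${\bf E}(N_1\mid N_0=k)\le\mu k$ for \emph{every} $k$. This gives $\rho^*(1)\le R(1)=\mu$, hence $\rho=\rho^*(1)\le\mu$. Your estimate ${\bf E}(N_{n+1}\mid N_n=k)\le\mu k+c\,k^{1-\delta}+\dots$ is only asymptotic in $k$. As you noticed, it yields only $\rho^*(1)\le\max(\rho,\mu)$, which says nothing once $\rho^*(1)=\rho$.

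Your Pruitt-type argument with $x_j=j$ is the right mechanism, but it needs exactly the same missing input. Given ${\bf E}(N_1\mid N_0=i)\le\mu i$ for all $i$, criterion~(\ref{IneqCriteriaRight}) with $c=\mu$ gives $\rho\le\mu$ at once. Without it, no test sequence ($j+a$, $j^h$, local modifications at small states) can succeed. The criterion is an equivalence, and~(\ref{ApproxLipBBP}) by itself does not imply $\rho\le\mu$. For instance, (\ref{ApproxLipBBP}) allows $g(x,y)=\min(x,y)$ and $L(x,y)=1$ for $1\le x+y\le K$, $L=g$ otherwise. Take $F_{1,1},M_{1,1}$ independent Bernoulli$(p)$ and $K\ge 2$. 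Then $\mu=p$ but $p_{1,1}=2p-p^2>p$, so $\rho\ge p_{1,1}>\mu$.

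The same example defeats your fallback. Domination by a Galton--Watson process of mean $\mu+\varepsilon$ would need ${\bf E}(N_1\mid N_0=k)\le(\mu+\varepsilon)k$ for all $k$, and the ``lower-order'' corrections are not small at small $k$. So $\rho\le\mu$ requires extra structure of the mating function that your proposal does not supply. One example is superadditivity of $L$, which by Fekete's lemma gives $k^{-1}{\bf E}(N_1\mid N_0=k)\le\mu$ for all $k$.

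A smaller point on Step~3: your second option cannot work. The Lyapunov remark after~(\ref{BnCond}) only lets you \emph{decrease} $h$, so nothing yields some $h>h^*_+$ from $h=h^*_+$. The only workable reading is that~(\ref{BBPCond}) holds for some $h>h^*_+$. That is what the paper tacitly uses when it says the assumptions of Theorem~\ref{YnSub} ``were checked above''.
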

\begin{proof}
Since the process is subcritical and $\xi$ is degenerate, $\rho^+=0$ and the process is strongly subcritical. The assumptions of Theorem~\ref{YnSub} were checked above. Let prove that $\rho\le \mu$.

By Lemma 5 of~\cite{Zhiyanov} we have
$$
{\bf E}N_n\le R(1)^n = \mu^{n} 
$$
and $\rho^*(1)\le R(1)$. Since $\rho^+=0$ we have $h^*_{++}=+\infty$. By Lemma~\ref{RhoRepresentation} $\rho^*(h)=\rho$ for any $h$. Thus, $\rho=\rho^*(1)\le R(1)=\mu$.
\end{proof}
In the case of BBPRE the situation is more complicated.
\begin{thm}
Let $\{N_n\}$ be a subcritical BBPRE, satisfying the approximation condition~(\ref{ApproxLipBBP}). Assume that $\widehat{h}>1$ and $R'(1)<0$. Then the process is strongly subcritical and $\rho = \rho^*(1)$. 

If~(\ref{BBPRECond}) holds for $h>h_{+}^*$, then Theorem~\ref{YnSub} holds.
\end{thm}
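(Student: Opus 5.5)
Our plan is to show three things: $\rho\ge R(1)$, $\rho^+<R(1)$, and $\rho=\rho^*(1)$. The second half of the statement then follows directly from Theorem~\ref{YnSub}, because the verification of~(\ref{BnCond}) with ${\bf E}^{(h)}\kappa<\infty$ was already carried out above under~(\ref{BBPRECond}).

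\emph{Step 1: $\rho^+<R(1)$.} Since $R$ is convex and $R'(1)<0$, we have $\rho^+=\inf_{h\ge 0}R(h)<R(1)$. If the infimum is attained at some $\beta>1$, the inequality is strict by strict convexity (unless $\xi$ is degenerate). In the degenerate case $\rho^+=0$ when $\mu<1$, and the claim is trivial. Strong subcriticality therefore reduces to proving $\rho\ge R(1)$.

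\emph{Step 2: $\rho\ge R(1)$.} We use Lemma~\ref{RhoRepresentation}, part ii). The key input is a lower bound ${\bf E}(N_n\mid N_0=i)\ge c\,R(1)^n$, or at least $\rho^*(1)\ge R(1)$. For $1>\beta$ this is exactly what the last part of the proof of Lemma~\ref{RhoRepresentation} gives. Here, however, $R'(1)<0$ means $1<\beta$, so we argue directly instead.

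We change measure to ${\bf P}^{(1)}$, under which the walk has negative drift $m(1)<0$. We then need ${\bf E}_{\boldsymbol\eta}N_n\ge c\,e^{S_n}$ on an event whose probability is not exponentially small. The natural idea is to start from a large $i$. Using~(\ref{YnRec}) together with the bound~(\ref{BnCond}) on $B_n$, the quenched ratio $N_n/e^{S_n}$ stays near $i$ as long as $e^{S_j}i$ remains large. Under ${\bf P}^{(1)}$ the walk drifts down linearly, so this holds only for times of order $\ln i$.

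To get a bound valid for all $n$, we condition instead on the event $\{S_j\ge -a,\ j\le n\}$. Under ${\bf P}^{(1)}$ its probability decays like $R$-type exponential factors, which would lose the exponent. We therefore expect the correct route to be the identity
\[
\sum_n s^n\,{\bf E}N_n=\infty \quad\text{for } s>1/R(1),
\]
combined with $\rho=\rho^*(1)$. This combination is essentially Step 3.

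\emph{Step 3: $\rho=\rho^*(1)$.} By Lemma~\ref{RhoRepresentation}, $\rho^*(h)=\rho$ for $h<h^*_{++}$. We must show $1<h^*_{++}$, that is, $R(1)\le\rho$ whenever $R$ is decreasing at $1$. If $R'(1)<0$, then $R(h)>R(1)$ on a neighbourhood to the left of $1$. By the bound $\rho\le\rho^*(h)\le\max(\rho,R(h))$ from that proof, we get $\rho^*(1)\ge R(1)$ provided ${\bf E}N_n$ is not exponentially smaller than $R(1)^n$.

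The lower bound on ${\bf E}N_n$ comes from a superadditivity (or supermultiplicativity) argument for the mating function. The homogeneity of $g$ and the approximation~(\ref{ApproxLipBBP}) give ${\bf E}_{\boldsymbol\eta}N_{n}\ge A_nN_{n-1}-c_1(x+y)^{1-\delta}$, and after averaging this yields ${\bf E}N_n\ge R(1)^n\cdot\prod(1-o(1))$ along large populations.

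\emph{Main obstacle.} The main obstacle is this lower bound $\rho^*(1)\ge R(1)$, since it requires ${\bf E}N_n$ to be of exact exponential order $R(1)^n$ despite the negative drift under ${\bf P}^{(1)}$. My first attempt would be to establish the bound for a large initial state $i$ on the event where the environment keeps $N$ large. The exponential cost of that event should be absorbed by choosing $h$ slightly below $1$, where $R(h)>R(1)$ still dominates. Then I would let $h\uparrow 1$.
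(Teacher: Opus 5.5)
\textbf{The gap is Step 2.} Your Step 1 and Step 3 both lean on the claim $\rho\ge R(1)$, equivalently ${\bf E}(N_n|N_0=i)\gtrsim R(1)^n$. You do not prove it; you call it the main obstacle. It is also false in general, so no refinement of the sketch can close it.

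Take a degenerate environment, $L=g=\min$, and $F,M$ independent Bernoulli$(p)$ for each family. Then $N_n\in\{0,1\}$ and ${\bf P}(N_n=1)={\bf E}N_n=p^{2n}$. Hence $\rho=\rho^*(1)=p^2<p=\mu=R(1)$, even though $\mu<1$, $R'(1)=\mu\ln\mu<0$ and $\widehat h=\infty$.

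Your heuristic fails for a concrete reason. The bound ${\bf E}_{\boldsymbol\eta}N_n\ge A_nN_{n-1}-c_1(x+y)^{1-\delta}$ is informative only while $N_{n-1}$ is large. A surviving subcritical population sits at small sizes, where mating loses a fixed fraction every generation. Passing to $h\uparrow 1$ cannot help either, because $\rho^*(h)=\rho$ on the whole range $h<h^*_{++}$.

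The paper uses the inequality in the opposite direction: ${\bf E}N_n\le R(1)^n$ by Lemma 5 of \cite{Zhiyanov}, i.e.\ $\rho^*(1)\le R(1)$.

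\textbf{Step 3 misreads Lemma~\ref{RhoRepresentation}.} The condition $1<h^*_{++}$ needs no comparison of $R(1)$ with $\rho$. By definition $R'(h^*_{++})>0$; $R'$ is nondecreasing and $R'(1)<0$. So $1<h^*_{++}$ holds automatically, also when $h^*_{++}=+\infty$, and part ii) gives $\rho^*(1)=\rho$ directly.

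The paper reaches the same identity through a case split. If $\rho^*(1)<R(1)$, it applies part ii). If $\rho^*(1)=R(1)$, it looks at $h$ slightly above $1$ and obtains $\rho=R(1)$, hence $\rho>\rho^+$.

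Be aware that strong subcriticality comes out of this only in the second branch. In the first branch, where the example above lies, it cannot come from any bound of the type $\rho\ge R(1)$. It is trivial for degenerate $\xi$, since then $\rho^+=0$. In a genuine random environment it can fail:
\begin{itemize}
\item with probability $\alpha$ take $(F,M)\equiv(1,1)$;
\item with probability $1-\alpha$ take $(F,M)$ uniform on $\{(1,0),(0,1)\}$.
\end{itemize}
Then $N_n\in\{0,1\}$, $\rho=\alpha$, and $R(h)=\alpha+(1-\alpha)2^{-h}$. So $\rho=\alpha=\inf_h R(h)=\rho^+$, although $R'(1)<0$.
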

\begin{proof}
By Lemma 5 of~\cite{Zhiyanov} we have $\rho^*(1)\le R(1)$. If $\rho^*(1)<R(1)$, then $\rho^*(1)=\rho$ by Lemma~\ref{RhoRepresentation}. Otherwise, $\rho^*(1)=R(1)$ and $\rho^*(h)>R(h)$ for all $h\in (1,1+\delta)$ for some $\delta>0$ , since $\rho^*(h)-R(h)$ is the increasing function in some neighbourhood of $1$. Thus, $\rho^*(h)=\rho$ for all $h\in (1,1+\delta)$ and $\rho=\rho^*(1)=R(1)$ by continuity. 
\end{proof}
\begin{remark}
Thus, $R'(1)<0$ and~(\ref{BBPRECond}) for $h=\beta$ is a sufficient condition for Theorem~\ref{YnSub}. 
\end{remark}

\subsection*{Acknowledgment} 
We wish to thank G.A. Bakay for useful discussions, and professors V.A. Vatutin and V.I. Afanasyev for important remarks.

This work was supported by the Russian Science Foundation under grant no. 24-11-00037,
https://rscf.ru/en/project/24-11-00037/ and performed at Steklov Mathematical Institute of Russian Academy of Sciences.


\end{document}